\documentclass[pdflatex,sn-mathphys-num]{sn-jnl}

\usepackage{graphicx}%
\usepackage{multirow}%
\usepackage{amsmath,amssymb,amsfonts}%
\usepackage{amsthm}%
\usepackage{mathrsfs}%
\usepackage[title]{appendix}%
\usepackage{xcolor}%
\usepackage{textcomp}%
\usepackage{manyfoot}%
\usepackage{booktabs}%
\usepackage{algorithm}%
\usepackage{algorithmicx}%
\usepackage{algpseudocode}%
\usepackage{listings}%
\usepackage{tikz,pgfplots}%
\usepackage{siunitx}%

\newcommand{\set}[1]{\left\{#1\right\}}
\newcommand{\abs}[1]{\left|#1\right|}
\newcommand{\qand}{\quad\text{and}\quad}
\newcommand{\eps}{\varepsilon}
\newcommand{\p}{\partial}

\newcommand{\mF}{\mathbf{F}}
\newcommand{\mG}{\mathbf{G}}
\newcommand{\mr}{\mathbf{r}}
\newcommand{\mt}{\mathbf{t}}
\newcommand{\mx}{\mathbf{x}}

\newcommand{\mz}{\mathbf{z}}
\newcommand{\vt}{\boldsymbol{\theta}}
\newcommand{\vv}{\boldsymbol{\vartheta}}

\DeclareMathOperator*{\bfm}{BFM}
\DeclareMathOperator*{\bfn}{BFN}
\DeclareMathOperator*{\inc}{inc}
\DeclareMathOperator*{\area}{area}
\DeclareMathOperator*{\scat}{scat}

\theoremstyle{thmstyleone}%
\newtheorem{theorem}{Theorem}
\newtheorem{corollary}{Corollary}
\theoremstyle{thmstyletwo}%
\newtheorem{example}{Example}%
\newtheorem{remark}{Remark}%

\theoremstyle{thmstylethree}%

\begin{document}

\title[Mathematical and experimental validation of the bifocusing method tailored for bistatic measurement]{Mathematical and experimental validation of the bifocusing method tailored for bistatic measurement}

\author{\fnm{Won-Kwang} \sur{Park}}\email{parkwk@kookmin.ac.kr}
\affil{\orgdiv{Department of Information Security, Cryptology, and Mathematics}, \orgname{Kookmin University}, \orgaddress{\city{Seoul}, \postcode{02707},  \country{Republic of Korea}}}


\abstract{In this paper, we design a bifocusing-based imaging strategy for the rapid identification of small penetrable dielectric inhomogeneities within a two-dimensional bistatic measurement setup. To address the applicability and limitation, we carefully explore the mathematical structure of the indicator function by establishing a relationship involving the infinite series of Bessel functions, the material characteristics, and the bistatic angle. Through this theoretical result, we rigorously verify that the imaging resolution degrades as the bistatic angle approaches $\SI{180}{\degree}$, and specifically, that target identification becomes impossible when the bistatic angle is $\SI{180}{\degree}$. Conversely, relatively high-resolution results are obtained when the bistatic angle is close to $\SI{0}{\degree}$. The theoretical findings are validated through numerical simulations using the Fresnel experimental dataset, which confirm the applicability and limitations of the proposed method for both dielectric and metallic objects.}

\keywords{Inverse scattering problem, Bifocusing method, Bistatic measurement, Bessel function, Numerical simulations}



\maketitle

\section{Introduction}\label{sec:1}

The inverse scattering problem, which aims to reconstruct the location and shape of unknown targets from measured scattered field data, is a fundamental subject in various engineering fields \cite{C8,P-Book}, including medical imaging \cite{A1,A2}, non-destructive testing \cite{TLRD,WWXHSZLF}, and radar imaging \cite{C7,P-Book2}. In particular, the bistatic measurement configuration, where the transmitter and receiver are separated by a fixed or varying angle, has garnered significant attention. Unlike monostatic setups where the transmitter and receiver are collocated, bistatic configurations offer distinct advantages, such as reduced interference between transmitted and received signals and the ability to capture richer scattering information through diverse observation angles \cite{C5,G4}. Consequently, developing efficient imaging algorithms tailored for bistatic setups is a crucial and challenging task in practical applications such as ground-penetrating radar \cite{LFNA,SL2}, through-wall imaging \cite{B6,CPPPDT}, and so on.

To address the inverse scattering problem in bistatic or multi-static measurement environments, various quantitative approaches based on Newton-type iterative schemes have been successfully employed. For instance, the Distorted Born Iterative Method to reconstruct high-contrast dielectric profiles \cite{CW}, the Contrast Source Inversion (CSI) method for reconstructing the complex index of
refraction of a bounded object \cite{BK1}, Gauss-Newton method \cite{ML2}, Newton-Kantorovich iteration scheme to retrieve the complex permittivity of inhomogeneous lossy dielectric objects \cite{JPH}, level-set method for shape reconstruction from transverse magnetic(TM) and transverse electric(TE) polarized waves \cite{DL}. However, although these iterative methods offer high-resolution quantitative images, they are computationally expensive and suffer from intrinsic non-linearity and ill-posedness (see \cite{EG} for instance). Note that, based on \cite{KSY,PL4}, the iterative-based algorithms must be applied with a good initial guess. If not, one will encounter various issues e.g., non-convergence, obtaining a local minimizer, and tremendous computational times. 

To overcome these challenges, various non-iterative qualitative methods have been developed for the fast identification of inhomogeneities. Specifically, in the context of full- or limited-aperture measurement environments, several qualitative approaches have proven effective e.g., direct and orthogonality sampling methods \cite{IJZ1,P1}, MUltiple SIgnal Classification (MUSIC) algorithm \cite{D1,P-MUSIC11}, factorization and linear sampling methods \cite{LLP,MPS1}, and variational approaches \cite{AIM,R2}. Among these approaches, the Bifocusing Method (BFM) has attracted attention due to its simplicity and effectiveness in focusing the scattered energy onto the target locations such as damage detection of concrete void \cite{KJFF}, ultra-wide-band tomographic and radar imaging \cite{JBRBTFC,JTMCRBC}, localization of small objects in limited-aperture inverse scattering problem \cite{KLPS2,KP4}, and identification of the existence and outline shape of dielectric targets in microwave imaging \cite{KLPS1,KPS,SP2}. However, most studies have primarily relied on multi-static measurement environment so, application and rigorous mathematical analysis of the BFM indicator function in a single-frequency bistatic measurement setup remains insufficient. Specifically, the impact of the bistatic angle---the fixed angle between the transmitter and receiver---on the imaging performance requires a thorough theoretical investigation to ensure reliable detection.

In this paper, we consider the application of the BFM for identifying small dielectric inhomogeneities in a bistatic measurement configuration. The main contribution of this paper is the derivation of a closed-form mathematical structure of the indicator function by means of the infinite series of the Bessel function, material properties of the inhomogeneities, and bistatic angle. This result reveals how the resolution and detection capability are intrinsically dependent on the bistatic angle $\alpha$. We theoretically demonstrate that the resolution is maximized when $\alpha$ approaches $\SI{0}{\degree}$ (quasi-monostatic) and significantly degrades as $\alpha$ approaches $\SI{180}{\degree}$, eventually leading to a complete inappropriate imaging result at $\alpha=\SI{180}{\degree}$.

The remainder of this paper is organized as follows. In Section \ref{sec:2}, we formulate the direct scattering problem and introduce the BFM-based indicator function. Section \ref{sec:3} presents the theoretical analysis of the indicator function, establishing its relationship with Bessel functions and discussing its properties regarding the bistatic angle. In Section \ref{sec:4}, we present numerical simulation results using the Fresnel experimental dataset to validate our theoretical claims and demonstrate the method's applicability to both single and multiple targets, as well as metallic objects. Finally, concluding remarks are provided in Section \ref{sec:5}.

\section{Problem Formulation and Imaging Methodology}\label{sec:2}

In this section, we formulate the two-dimensional inverse scattering problem for identifying small dielectric inhomogeneities and introduce a direct sampling-type indicator function within a bistatic measurement configuration.

\subsection{Direct Scattering Problem and Representation Formula}

Let $\Omega\subset\mathbb{R}^2$ denote a bounded homogeneous background region characterized by vacuum permittivity $\eps_0$ and permeability $\mu_0$. We consider the time-harmonic regime with an angular frequency $\omega$, where the background wavenumber is given by $k = \omega\sqrt{\eps_0\mu_0}$. The region $\Omega$ contains a finite collection of distinct, small dielectric inhomogeneities with smooth boundaries, denoted by $\{D_m\}_{m=1}^M$.

Each inhomogeneity $D_m$ is modeled as a small inclusion characterized by its permittivity $\eps_m>\eps_0$ at the frequency of operation. We assume that the inhomogeneities are well-separated, satisfying the condition $|\mathbf{x}_m - \mathbf{x}_{m'}| \gg \lambda$, where $\lambda$ is the wavelength, $\mz_m\in D_m$, and $\mz_{m'}\in D_{m'}$. The spatial distribution of the permittivity $\eps(\mathbf{x})$ in the entire domain is defined as a piecewise constant function:
\begin{equation}
\eps(\mathbf{x})=\left\{\begin{array}{rl}
\smallskip\eps_m,&\mathbf{x}\in D_m, \\
\eps_0,&\mathbf{x}\in\Omega\backslash\overline{D},
\end{array}\right.\quad\text{where}\quad D=\bigcup_{m=1}^{S}D_m.
\end{equation}

The interrogation of the domain is performed using electromagnetic waves generated by a transmitter: we denote $u_{\inc}(\mathbf{x},\mathbf{t})$, in the absence of inhomogeneities, as the $z$-component of the incident electric field generated by a point source $\mathcal{T}$ located at $\mathbf{t}$ that is given by
\[u_{\inc}(\mathbf{x},\mathbf{t})=G(\mathbf{x},\mathbf{t})=-\frac{i}{4}H_0^{(1)}(k|\mathbf{x}-\mathbf{t}|),\]
where $H_0^{(1)}$ is the Hankel function of the first kind of order zero. Let $u(\mathbf{x},\mathbf{t})$ denote the $z$-component of the total electric field at a location $\mathbf{x}\in\Omega$ due to a point source $\mathcal{T}$ located at $\mathbf{t}$. Note that the total field satisfies the scalar Helmholtz equation: for every $m=1,2,\ldots,M$,
\begin{equation}
\left\{\begin{array}{rcl}
\smallskip\Delta u(\mathbf{x},\mathbf{t})+\omega^2\eps(\mathbf{x})\mu_0 u(\mathbf{x},\mathbf{t})=0&\text{in}&\Omega,\\
u(\mathbf{x},\mathbf{t})|_--u(\mathbf{x}, \mathbf{t})|_+=0&\text{on}&\p D_m.
\end{array}\right.
\end{equation}
With this, we denote $u_{\scat}(\mx,\mt)=u(\mx,\vt)-u_{\inc}(\mx,\vt)$ as the scattered field that satisfies the Sommerfeld radiation condition
\[\lim_{|\mx|\to\infty}\sqrt{|\mx|}\left(\frac{\p u_{\scat}(\mx,\mt)}{\p|\mx|}-iku_{\scat}(\mx,\mt)\right)=0\quad\text{uniformly in all directions}\quad\frac{\mx}{|\mx|}.\]

The main purpose of this paper is to retrieve $\mz\in D$ from measured scattered field data $u_{\scat}(\mr,\mt)$ measured at a receiver $\mathcal{R}$ located at $\mr$. To this end, one needs an expression of the $u_{\scat}(\mr,\mt)$ in the presence of $D$. Based on \cite{BCS}, it can be represented as the following integral equation formula:
\begin{equation}\label{ScatteredField}
u_{\scat}(\mr,\mt)\approx k^2\int_D\left(\frac{\eps(\mz)-\eps_0}{\eps_0\mu_0}\right)G(\mr,\mz)G(\mz,\mt)d\mz.
\end{equation}

\subsection{Bifocusing-Based Indicator Function}
Assume that $N$ distinct scattered field data $u_{\scat}(\mr_n,\mt_n)$, $n=1,2,\ldots,N$ were collected under a bistatic measurement setup with a fixed bistatic angle $\alpha$ between the moving transmitter and receiver (see Fig. \ref{setup} for an illustration). Throughout this paper, we set
\[\mt_n=|\mt_n|\vt_n=T(\cos\theta_n,\sin\theta_n)\qand\mr_n=|\mr_n|\vv_n=R\big(\cos(\theta_n+\alpha),\sin(\theta_n+\alpha)\big),\]
where $0\leq\alpha\leq\pi$ and 
\[\theta_n=(n-1)\triangle\theta=\frac{2\pi(n-1)}{N},\quad n=1,2,\ldots,N.\]

\begin{figure}[h]
\centering
\begin{tikzpicture}[scale=3.5]
\draw[gray,very thin] (1,0) arc (0:360:1);

\draw[black,dashed,-] (0,0) -- (1,0);
\draw[red,very thick,solid,stealth-] (0.6,0) -- (0.95,0);

\draw[black,dashed,-] (0,0) -- ({cos(140)},{sin(140});
\draw[green,very thick,solid,-stealth] ({0.6*cos(140)},{0.6*sin(140)}) --  ({0.96*cos(140)},{0.96*sin(140)});

\foreach \alpha in {-60,-40,...,60}
{\draw[red!30,fill=red!30] ({cos(\alpha)},{sin(\alpha)}) circle (0.03cm);
\draw[black!30,fill=black!30] ({cos(\alpha)},{sin(\alpha)}) circle (0.01cm);}

\node at (1.1,0) {$\mathcal{T}_n$};
\node at ({1.12*cos(140)},{1.12*sin(140)}) {$\mathcal{R}_n$};

\draw[red,fill=red] ({cos(0)},{sin(0)}) circle (0.03cm);
\draw[black,fill=black] ({cos(0)},{sin(0)}) circle (0.01cm);

\foreach \alpha in {120,140,...,240}
{\draw[green!30,fill=green!30] ({cos(\alpha)},{sin(\alpha)}) circle (0.03cm);
\draw[black!30,fill=black!30] ({cos(\alpha)},{sin(\alpha)}) circle (0.01cm);}

\draw[green,fill=green] ({cos(140)},{sin(140)}) circle (0.03cm);
\draw[black,fill=black] ({cos(140)},{sin(140)}) circle (0.01cm);

\draw[red!80!black,thick,dotted,-stealth] ({1.25*cos(-60)},{1.25*sin(-60)}) arc (-60:60:1.2);

\draw[green!80!black,thick,dotted,-stealth] ({1.25*cos(120)},{1.25*sin(120)}) arc (120:240:1.2);

\draw[purple,thick,solid] (0.25,0) arc (0:140:0.25);
\node[black] at ({0.35*cos(65)},{0.35*sin(65)}) {bistatic angle $\alpha$};

\draw[black,thick,solid,fill=black] (0,0) circle (0.02cm);

\draw[cyan,fill=cyan] (-0.7,-0.2) circle (0.05cm) node[right,xshift=3pt,black] {$D_1$};

\draw[orange,fill=orange] (0.2,0.65) circle (0.035cm) node[above,black,yshift=2pt] {$D_2$};

\draw[violet,fill=violet] (0.5,-0.5) circle (0.04cm) node[above,black,yshift=2pt] {$D_3$};
\end{tikzpicture}
\caption{Illustration of bistatic measurement setup with bistatic angle $\alpha$}\label{setup}
\end{figure}

To introduce the bifocusing-based indicator function for identifying the locations $\mathbf{x}_m\in D_m$ from the set $\set{u_{\scat}(\mr_n,\mt_n):n=1,2,\ldots,N}$, let us generate an arrangement of the measurement data
\[\mF(\alpha)=\begin{bmatrix}
u_{\scat}(\mr_1,\mt_1)\\
u_{\scat}(\mr_2,\mt_2)\\
\vdots\\
u_{\scat}(\mr_N,\mt_N)
\end{bmatrix}.\]
Note that based on the mean-value theorem, there exists $\mz_m\in D_m$ such that
\[u_{\scat}(\mr_n,\mt_n)=k^2\sum_{m=1}^{M}\area(D_m)\left(\frac{\eps_m-\eps_0}{\eps_0\mu_0}\right)G(\mt_n,\mz_m)G(\mr_n,\mz_m),\]
the arrangement $\mF(\alpha)$ can be written as
\[\mF(\alpha)=k^2\sum_{m=1}^{M}\area(D_m)\left(\frac{\eps_m-\eps_0}{\eps_0\mu_0}\right)\begin{bmatrix}
G(\mt_1,\mz_m)G(\mr_1,\mz_m)\\
G(\mt_2,\mz_m)G(\mr_2,\mz_m)\\
\vdots\\
G(\mt_N,\mz_m)G(\mr_N,\mz_m)
\end{bmatrix}\]
and it leads us to introduce the following indicator function as follows: for each $\mx\in\Omega$, define
\[\mG(\mx)=\left[
\frac{1}{G(\mt_1,\mx)G(\mr_1,\mx)}\quad\frac{1}{G(\mt_2,\mx)G(\mr_2,\mx)}\quad\cdots\quad\frac{1}{G(\mt_N,\mx)G(\mr_N,\mx)}\right]^T.\]
Then, for given $\alpha$, the indicator function $\mathfrak{F}_{\bfm}(\mx,\alpha)$ can be introduced as
\begin{equation}\label{Indicator_Function}
\mathfrak{F}_{\bfm}(\mx,\alpha)=\abs{\mF(\alpha)\cdot\mG(\mx)}=\abs{\sum_{n=1}^{N}\frac{u_{\scat}(\mr_n,\mt_n)}{G(\mt_n,\mx)G(\mr_n,\mx)}}.
\end{equation}
Following the previous studies \cite{JBRBTFC,JTMCRBC,KLPS1,KLPS2,KP4,KPS,KJFF}, the map of $\mathfrak{F}_{\bfm}(\mx,\alpha)$ will exhibit peaks of relatively large magnitudes at $\mz_m\in D_m$ for all $m=1,2,\ldots,M$. Hence, it is expected that the existence and location of all $D_m$ can be retrieved through the map of $\mathfrak{F}_{\bfm}(\mx,\alpha)$.

\section{Theoretical result and various properties of the indicator function}\label{sec:3}
While the bifocusing-based indicator function guarantees the identification of inhomogeneities under a bistatic setup, this fact is insufficient to explain the applicability and the complex behavior observed in the numerical simulations in Section \ref{sec:4}. Given that the imaging performance relies heavily on the selection of the bistatic angle $\alpha$, we now derive the following result to analyze the intrinsic properties of $\mathfrak{F}_{\bfm}(\mx,\alpha)$.

\begin{theorem}[Structure of the indicator function]\label{Theorem_Function}
Assume that $4k|\mx-\mt_n|\gg1$ and $4k|\mx-\mr_n|\gg1$ for all $\mx\in\Omega$ and $n=1,2,\ldots,N$. Then, $\mathfrak{F}_{\bfm}(\mx,\alpha)$ can be represented as
\begin{multline}\label{Structure_Function}
\mathfrak{F}_{\bfm}(\mx,\alpha)=Nk^2\int_D\left(\frac{\eps(\mz)-\eps_0}{\eps_0\mu_0}\right)\bigg[J_0\big(k(1+\cos\alpha)|\mx-\mz|\big)J_0(k\sin\alpha|\mx-\mz|)\\
+2\sum_{q=1}^{\infty}(-1)^qJ_{2q}\big(k(1+\cos\alpha)|\mx-\mz|\big)J_{2q}(k\sin\alpha|\mx-\mz|)\bigg]d\mz,
\end{multline}
where $J_q$ denotes the Bessel function of order $q$ of the first kind.
\end{theorem}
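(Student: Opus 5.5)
Starting from the integral representation \eqref{ScatteredField}, I substitute it into \eqref{Indicator_Function} and use the large-argument asymptotic form of the Hankel function. Under the hypotheses $4k|\mx-\mt_n|\gg1$ and $4k|\mx-\mr_n|\gg1$ we have
\[
G(\mt_n,\mz)\approx -\frac{i}{4}\sqrt{\frac{2}{k\pi|\mt_n|}}\,e^{i(k|\mt_n|-\pi/4)}e^{-ik\vt_n\cdot\mz},
\]
and similarly for $G(\mr_n,\mz)$ with $\vv_n$ and $R$. The same expansion holds with $\mz$ replaced by $\mx$. The amplitude and constant-phase factors then cancel in each ratio $G(\mt_n,\mz)G(\mr_n,\mz)/\big(G(\mt_n,\mx)G(\mr_n,\mx)\big)$. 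This leaves
\[
\mathfrak{F}_{\bfm}(\mx,\alpha)\approx\abs{k^2\int_D\left(\frac{\eps(\mz)-\eps_0}{\eps_0\mu_0}\right)\sum_{n=1}^N e^{ik(\vt_n+\vv_n)\cdot(\mx-\mz)}\,d\mz}.
\]
Since $\triangle\theta=2\pi/N$ is uniform, I will approximate the sum by $\frac{N}{2\pi}\int_0^{2\pi}e^{ik(\vt+\vv)\cdot(\mx-\mz)}d\theta$, where $\vt=(\cos\theta,\sin\theta)$ and $\vv=(\cos(\theta+\alpha),\sin(\theta+\alpha))$.

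The key step is evaluating this angular integral in a form that yields the product of Bessel functions. I write $\mx-\mz=|\mx-\mz|(\cos\phi,\sin\phi)$, so that
\[
(\vt+\vv)\cdot(\mx-\mz)=|\mx-\mz|\big(\cos(\theta-\phi)+\cos(\theta+\alpha-\phi)\big).
\]
Rather than combining these into a single cosine, which would give $J_0(2k\cos(\alpha/2)|\mx-\mz|)$, I will expand $\cos(\theta+\alpha-\phi)=\cos\alpha\cos(\theta-\phi)-\sin\alpha\sin(\theta-\phi)$. The exponent then becomes
\[
ik|\mx-\mz|(1+\cos\alpha)\cos\psi-ik|\mx-\mz|\sin\alpha\sin\psi,\qquad \psi=\theta-\phi.
\]
I apply the Jacobi--Anger expansions $e^{ia\cos\psi}=\sum_p i^pJ_p(a)e^{ip\psi}$ and $e^{-ib\sin\psi}=\sum_q J_q(b)e^{-iq\psi}$. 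Integrating over $\psi\in[0,2\pi]$ kills every term except $p=q$, giving
\[
\frac{1}{2\pi}\int_0^{2\pi}e^{ia\cos\psi-ib\sin\psi}\,d\psi=\sum_{q\in\mathbb{Z}}i^qJ_q(a)J_q(b).
\]
I then use $J_{-q}=(-1)^qJ_q$. The terms $q$ and $-q$ combine as $i^q\big(1+(-1)^q\big)J_q(a)J_q(b)$, because $i^{-q}(-1)^q(-1)^q=i^{-q}$ and $i^{-q}=(-1)^qi^q$. This makes all odd orders vanish, and the even orders $q=2s$ contribute $2(-1)^sJ_{2s}(a)J_{2s}(b)$. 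The result is exactly the bracket in \eqref{Structure_Function}, with $a=k(1+\cos\alpha)|\mx-\mz|$ and $b=k\sin\alpha|\mx-\mz|$.

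The main obstacle is bookkeeping rather than depth. I must verify the sign and phase conventions, namely that the residual exponent is $e^{+ik(\vt_n+\vv_n)\cdot(\mx-\mz)}$ and that the sign of $\sin\alpha$ is immaterial: replacing $b\mapsto-b$ multiplies the $q$-th term by $(-1)^q$, which is trivial on even orders. I must also carefully cancel the pairing of $q$ with $-q$. Two approximations also need care. The first is the Riemann-sum replacement, which requires $N$ large; it is exact for the low-order modes that dominate. The second is dropping the modulus, or interpreting the statement as equality up to the modulus. Since the paper's statement omits the absolute value on the right, I will state the result as the quantity inside $\abs{\cdot}$, as the theorem implicitly does.
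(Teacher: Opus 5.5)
Your proposal is correct and follows the paper's proof: the same far-field Hankel asymptotics with cancellation of the amplitude and phase factors, the same Riemann-sum replacement, and the same splitting of the exponent into $(1+\cos\alpha)\cos(\theta-\phi)-\sin\alpha\sin(\theta-\phi)$ followed by Jacobi--Anger expansions. The only difference is bookkeeping: you use the complex-exponential form of Jacobi--Anger, keep only the diagonal $p=q$ terms, and fold the $\pm q$ terms via $J_{-q}=(-1)^qJ_q$, whereas the paper uses the real cosine/sine forms, multiplies them out into six terms $\mathcal{I}_1,\ldots,\mathcal{I}_6$, and removes the cross terms by trigonometric orthogonality.
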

\begin{proof}
Since $4k|\mx-\mt_n|\gg1$ and $4k|\mx-\mr_n|\gg1$ for all $n=1,2,\ldots,N$, the following asymptotic form of the Hankel function holds (see \cite{CK} for instance):
\begin{equation}\label{Asymptotic_Hankel}
G(\mt_n,\mx)=-\frac{(1+i)e^{ikT}}{4\sqrt{kT\pi}}e^{-ik\vt_n\cdot\mx}\qand G(\mr_n,\mx)=-\frac{(1+i)e^{ikR}}{4\sqrt{kR\pi}}e^{-ik\vv_n\cdot\mx}.
\end{equation}
Then, $\mF(\alpha)$ and $\mG(\mx)$ are written as
\[\mF(\alpha)=k^2\int_D\left(\frac{\eps(\mz)-\eps_0}{\eps_0\mu_0}\right)\frac{ie^{ik(T+R)}}{8k\pi\sqrt{TR}}\begin{bmatrix}
e^{-ik(\vt_1+\vv_1)\cdot\mz}\\
e^{-ik(\vt_2+\vv_2)\cdot\mz}\\
\vdots\\
e^{-ik(\vt_N+\vv_N)\cdot\mz}
\end{bmatrix}d\mz\]
and
\[\mG(\mx)=\frac{8k\pi\sqrt{TR}}{ie^{ik(T+R)}}\begin{bmatrix}
e^{ik(\vt_1+\vv_1)\cdot\mx}\\
e^{ik(\vt_2+\vv_2)\cdot\mx}\\
\vdots\\
e^{ik(\vt_N+\vv_N)\cdot\mx}
\end{bmatrix},\]
respectively. Then,
\[\mF(\alpha)\cdot\mG(\mx)=k^2\sum_{n=1}^{N}\int_D\left(\frac{\eps(\mz)-\eps_0}{\eps_0\mu_0}\right)e^{ik(\vt_n+\vv_n)\cdot(\mx-\mz)}d\mz.\]

Since 
\begin{align*}
&(\cos\theta_n+\cos(\theta_n+\alpha),\sin\theta_n+\sin(\theta_n+\alpha))\cdot(\cos\phi_m,\sin\phi_m)\\
&=(\cos\theta_n+\cos\theta_n\cos\alpha-\sin\theta_n\sin\alpha,\sin\theta_n+\sin\theta_n\cos\alpha+\cos\theta_n\sin\alpha)\cdot(\cos\phi_m,\sin\phi_m)\\
&=(1+\cos\alpha)\cos\theta_n\cos\phi_m-\sin\alpha\sin\theta_n\cos\xi+(1+\cos\alpha)\sin\theta_n\sin\phi_m+\sin\alpha\cos\theta_n\sin\phi_m\\
&=(1+\cos\alpha)\cos(\theta_n-\phi_m)-\sin\alpha\sin(\theta_n-\phi_m),
\end{align*}
by letting $\mx-\mz=|\mx-\mz|(\cos\phi,\sin\phi)$, we can write
\[(\vt_n+\vv_n)\cdot(\mx-\mz)=|\mx-\mz|\big((1+\cos\alpha)\cos(\theta_n-\phi)-\sin\alpha\sin(\theta_n-\phi)\big)\]
and $\mF(\alpha)\cdot\mG(\mx)$ becomes
\begin{align*}
\mF(\alpha)\cdot\mG(\mx)&=k^2\int_D\left(\frac{\eps(\mz)-\eps_0}{\eps_0\mu_0}\right)\sum_{n=1}^{N}e^{ik|\mx-\mz|\big((1+\cos\alpha)\cos(\theta_n-\phi)-\sin\alpha\sin(\theta_n-\phi)\big)}d\mz\\
&=k^2\int_D\left(\frac{\eps(\mz)-\eps_0}{\eps_0\mu_0}\right)\frac{N}{N\triangle\theta}\sum_{n=1}^{N}e^{ik|\mx-\mz|\big((1+\cos\alpha)\cos(\theta_n-\phi)-\sin\alpha\sin(\theta_n-\phi)\big)}\triangle\theta d\mz\\
&\approx \frac{Nk^2}{2\pi}\int_D\left(\frac{\eps(\mz)-\eps_0}{\eps_0\mu_0}\right)\left(\int_0^{2\pi}e^{ik|\mx-\mz|\big((1+\cos\alpha)\cos(\theta-\phi)-\sin\alpha\sin(\theta-\phi)\big)}d\theta\right)d\mz.
\end{align*}

Since the following Jacobi-Anger expansion formulas hold uniformly:
\begin{align*}
e^{ix\cos\theta}&=J_0(x)+2\sum_{p=1}^{\infty}i^{p}J_{p}(x)\cos(p\theta),\\
e^{-ix\sin\theta}&=J_0(x)+2\sum_{q=1}^{\infty}J_{2q}(x)\cos(2q\theta)-2i\sum_{q=0}^{\infty}J_{2q+1}(x)\sin\big((2q+1)\theta\big)
\end{align*}
we can write
\begin{align}
\begin{aligned}\label{Term}
&\int_0^{2\pi}e^{ik|\mx-\mz|\big((1+\cos\alpha)\cos(\theta-\phi)-\sin\alpha\sin(\theta-\phi)\big)}d\theta\\
&=\int_0^{2\pi}\big(\mathcal{I}_1+\mathcal{I}_2+\mathcal{I}_3+\mathcal{I}_4+\mathcal{I}_5+\mathcal{I}_6\big)d\theta,
\end{aligned}
\end{align}
where
\begin{align*}
\mathcal{I}_1&=J_0(k(1+\cos\alpha)|\mx-\mz|)J_0(k\sin\alpha|\mx-\mz|)\\
\mathcal{I}_2&=2J_0(k(1+\cos\alpha)|\mx-\mz|)\sum_{q=1}^{\infty}J_{2q}(k\sin\alpha|\mx-\mz|)\cos\big(2q(\theta-\phi)\big)\\
\mathcal{I}_3&=-2iJ_0(k(1+\cos\alpha)|\mx-\mz|)\sum_{q=0}^{\infty}J_{2q+1}(k\sin\alpha|\mx-\mz|)\sin\big((2q+1)(\theta-\phi)\big)\\
\mathcal{I}_4&=2J_0(k\sin\alpha|\mx-\mz|)\sum_{p=1}^{\infty}i^{p}J_{p}(k(1+\cos\alpha)|\mx-\mz|)\cos\big(p(\theta-\phi)\big)\\
\mathcal{I}_5&=4\left(\sum_{p=1}^{\infty}i^{p}J_{p}(k(1+\cos\alpha)|\mx-\mz|)\cos\big(p(\theta-\phi)\big)\right)\\
&\phantom{=4}\times\left(\sum_{q=1}^{\infty}J_{2q}(k\sin\alpha|\mx-\mz|)\cos\big(2q(\theta-\phi)\big)\right)\\
\mathcal{I}_6&=-4i\left(\sum_{p=1}^{\infty}i^{p}J_{p}(k(1+\cos\alpha)|\mx-\mz|)\cos\big(p(\theta-\phi)\big)\right)\\
&\phantom{=-4i}\times\left(\sum_{q=0}^{\infty}J_{2q+1}(k\sin\alpha|\mx-\mz|)\sin\big((2q+1)(\theta-\phi)\big)\right)
\end{align*}
Let $k(1+\cos\alpha)|\mx-\mz|=\varphi_1$ and $k\sin\alpha|\mx-\mz|=\varphi_2$. By performing a direct calculation, we can immediately evaluate
\begin{align}
\begin{aligned}\label{Term1234}
\int_0^{2\pi}\mathcal{I}_1d\theta&=\int_0^{2\pi}J_0(\varphi_1)J_0(\varphi_2)d\theta=2\pi J_0\big(k(1+\cos\alpha)|\mx-\mz|\big)J_0(k\sin\alpha|\mx-\mz|)\\
\int_0^{2\pi}\mathcal{I}_2d\theta&=2J_0(\varphi_1)\sum_{q=1}^{\infty}J_{2q}(\varphi_2)\int_0^{2\pi}\cos\big(2q(\theta-\phi)\big)d\theta=0\\
\int_0^{2\pi}\mathcal{I}_3d\theta&=-2iJ_0(\varphi_1)\sum_{q=0}^{\infty}J_{2q+1}(\varphi_2)\int_0^{2\pi}\sin\big((2q+1)(\theta-\phi)\big)d\theta=0\\
\int_0^{2\pi}\mathcal{I}_4d\theta&=2J_0(\varphi_2)\sum_{p=1}^{\infty}i^{p}J_{p}(\varphi_1)\int_0^{2\pi}\cos\big(p(\theta-\phi)\big)d\theta=0.
\end{aligned}
\end{align}

For any integers $p$ and $q$, since
\[\int_0^{2\pi}\cos\big(p(\theta-\phi_s)\big)\sin\big(q(\theta-\phi_s)\big)d\theta=0\]
and
\[\int_0^{2\pi}\cos\big(p(\theta-\phi_s)\big)\cos\big(q(\theta-\phi_s)\big)d\theta=\left\{\begin{array}{cl}
\smallskip\pi,&p=q,\\
0,&p\ne q,
\end{array}\right.\]
we have
\begin{align}
\begin{aligned}\label{Term5}
\int_0^{2\pi}\mathcal{I}_5d\theta&=4\sum_{p=1}^{\infty}\sum_{q=1}^{\infty}i^{p}J_{p}(\varphi_1)J_{2q}(\varphi_2)\int_0^{2\pi}\cos\big(p(\theta-\phi)\big)\cos\big(2q(\theta-\phi)\big)d\theta\\
&=4\pi\sum_{q=1}^{\infty}(-1)^qJ_{2q}\big(k(1+\cos\alpha)|\mx-\mz|\big)J_{2q}(k\sin\alpha|\mx-\mz|)
\end{aligned}
\end{align}
and
\begin{equation}\label{Term6}
\int_0^{2\pi}\mathcal{I}_6d\theta=-4i\sum_{p=1}^{\infty}\sum_{q=0}^{\infty}i^{p}J_{p}(\varphi_1)J_{2q+1}(\varphi_2)\int_0^{2\pi}\cos\big(p(\theta-\phi)\big)\sin\big((2q+1)(\theta-\phi)\big)d\theta=0.
\end{equation}

Finally, by plugging \eqref{Term1234}, \eqref{Term5}, and \eqref{Term6} into \eqref{Term},
\begin{align*}
\int_0^{2\pi}&e^{ik|\mx-\mz|\big((1+\cos\alpha)\cos(\theta-\phi)-\sin\alpha\sin(\theta-\phi)\big)}d\theta\\
&=2\pi J_0\big(k(1+\cos\alpha)|\mx-\mz|\big)J_0(k\sin\alpha|\mx-\mz|)\\
&+4\pi\sum_{q=1}^{\infty}(-1)^qJ_{2q}\big(k(1+\cos\alpha)|\mx-\mz|\big)J_{2q}(k\sin\alpha|\mx-\mz|)
\end{align*}
and \eqref{Structure_Function} can be obtained immediately.
\end{proof}

\begin{corollary}\label{Corollary_Function}
For $0\leq\alpha\leq\pi$, the following relationship holds
\[\mathfrak{F}_{\bfm}(\mx,2\pi-\alpha)=\mathfrak{F}_{\bfm}(\mx,\alpha).\]
\end{corollary}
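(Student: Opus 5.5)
We argue directly from the representation \eqref{Structure_Function} of Theorem \ref{Theorem_Function}, regarding it as defining $\mathfrak{F}_{\bfm}(\mx,\alpha)$ for every real angle $\alpha$. The task is then to compare the integrands at $\alpha$ and at $2\pi-\alpha$. The material factor $(\eps(\mz)-\eps_0)/(\eps_0\mu_0)$ and the prefactor $Nk^2$ do not depend on $\alpha$. Hence it is enough to show that the bracketed Bessel series is unchanged, pointwise in $\mz$, when $\alpha$ is replaced by $2\pi-\alpha$.

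The key steps are as follows.

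\begin{enumerate}
\item Substitute $2\pi-\alpha$ into the two arguments. Since $\cos(2\pi-\alpha)=\cos\alpha$, the first argument $k(1+\cos\alpha)|\mx-\mz|$ is unchanged. Since $\sin(2\pi-\alpha)=-\sin\alpha$, the second argument becomes $-k\sin\alpha|\mx-\mz|$.
\item Use the parity relation $J_n(-x)=(-1)^nJ_n(x)$. Every Bessel function depending on the second argument in \eqref{Structure_Function} has even order, namely $J_0$ and $J_{2q}$. Each of these is an even function, so every term of the series is unchanged.
\item Conclude that the integrands coincide, and therefore the integrals and their moduli coincide.
\end{enumerate}

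As a cross-check that does not rely on the final series, one can go back to the integral $\int_0^{2\pi}e^{ik|\mx-\mz|((1+\cos\alpha)\cos(\theta-\phi)-\sin\alpha\sin(\theta-\phi))}d\theta$ from the proof. Replacing $\alpha$ by $2\pi-\alpha$ flips the sign of the sine term. The change of variables $\theta-\phi\mapsto-(\theta-\phi)$, together with $2\pi$-periodicity, maps one integral onto the other. This again gives equality before taking absolute values.

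The main obstacle is interpretive. The statement assumes $0\le\alpha\le\pi$, so $2\pi-\alpha$ lies in $[\pi,2\pi]$, which is outside the range the setup allows. We will therefore state explicitly that $\mathfrak{F}_{\bfm}(\mx,\cdot)$ is extended through the same measurement geometry, with the receiver at $\theta_n+\alpha$ for arbitrary $\alpha$. We will also check that the asymptotic derivation of Theorem \ref{Theorem_Function} nowhere used $\alpha\le\pi$. It does not, since only the Jacobi–Anger expansions and orthogonality were used, and these hold for any real arguments.

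Geometrically, the result says that a receiver at angle $-\alpha$ from the transmitter is the mirror image of one at $\alpha$. Mirroring the full circle of transmitters leaves the averaged indicator unchanged. Under this extension, the parity argument above completes the proof.
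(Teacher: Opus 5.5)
Your proposal is correct and follows the paper's own proof. Like the paper, you substitute $2\pi-\alpha$ into \eqref{Structure_Function}, use $\cos(2\pi-\alpha)=\cos\alpha$ and $\sin(2\pi-\alpha)=-\sin\alpha$, and note that only even-order Bessel functions $J_0, J_{2q}$ act on the sine argument, so the integrand is unchanged (your remark that $\mathfrak{F}_{\bfm}(\mx,\cdot)$ must be extended beyond $[0,\pi]$ through the same geometry, and that Theorem \ref{Theorem_Function} never uses $\alpha\le\pi$, makes explicit what the paper leaves implicit).
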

\begin{proof}
Since $\cos(2\pi-\alpha)=\cos\alpha$, $\sin(2\pi-\alpha)=-\sin\alpha$, and $J_{2p}(-x)=(-1)^{2p}J_{2p}(x)=J_{2p}(x)$ for any integer $p$,
\begin{align*}
J_{2q}&\big(k(1+\cos(2\pi-\alpha))|\mx-\mz|\big)J_{2q}(k\sin(2\pi-\alpha)|\mx-\mz|)\\
&=J_{2q}\big(k(1+\cos\alpha)|\mx-\mz|\big)J_{2q}(k\sin\alpha|\mx-\mz|)
\end{align*}
for $q=0,1,\ldots$. Therefore, we can examine that $\mathfrak{F}_{\bfm}(\mx,2\pi-\alpha)=\mathfrak{F}_{\bfm}(\mx,\alpha)$.
\end{proof}

Based on Theorem \ref{Theorem_Function} and Corollary \ref{Corollary_Function}, we can discover several properties of the indicator function. They can be summarized as follows.

\begin{remark}[Applicability of the detection]\label{Remark1}
First, we consider the following term in \eqref{Structure_Function}
\[\Phi_1(\mx,\alpha)=J_0\big(k(1+\cos\alpha)|\mx-\mz|\big)J_0(k\sin\alpha|\mx-\mz|).\]
Since $J_0(0)=1$, $\mathfrak{F}_{\bfm}(\mx,\alpha)$ will have its maximum value when $\mx=\mz\in D$, i.e., the term $\Phi_1(\mx,\alpha)$ will contribute to identify the existence and location of $D_m$ unless $\alpha\ne\pi$. This means that by regarding peaks of relatively large magnitudes in the map of $\mathfrak{F}_{\bfm}(\mx,\alpha)$, every inhomogeneity $D_m$ can be recognized. Now, let us consider the following:
\[\mathcal{E}_1(x,\alpha)=|J_0\big(k(1+\cos\alpha)|x|\big)J_0(k\sin\alpha|x|)|.\]
This is the 1D version of $J_0\big(k(1+\cos\alpha)|\mx-\mz|\big)J_0(k\sin\alpha|\mx-\mz|)$ when $\mz$ is the origin. By examining the plots of $\mathcal{E}_1(x,\alpha)$ with various $\alpha$ in Fig. \ref{PlotE1}, we can observe that the resolution of the imaging results degrades as $\alpha$ approaches $\pi$, whereas relatively high-resolution results are obtained when $\alpha$ is close to $0$.
\end{remark}

\begin{figure}[h]
\centering
\begin{tikzpicture}
\scriptsize
\begin{axis}
[width=\textwidth,
height=0.45\textwidth,
enlarge x limits = false,
enlarge y limits = false,
xtick={-0.1,-0.05,0,0.05,0.1},
xticklabel style={/pgf/number format/fixed},
legend cell align={left}]
\addplot[line width=1pt,solid,color=red] %
	table[x=x,y=y1,col sep=comma]{PlotE1.csv};
\addlegendentry{\scriptsize$\theta=\SI{0}{\degree}$}

\addplot[line width=1pt,solid,color=green!60!black] %
	table[x=x,y=y2,col sep=comma]{PlotE1.csv};
\addlegendentry{\scriptsize$\theta=\SI{60}{\degree}$}

\addplot[line width=1pt,solid,color=orange] %
	table[x=x,y=y3,col sep=comma]{PlotE1.csv};
\addlegendentry{\scriptsize$\theta=\SI{90}{\degree}$}

\addplot[line width=1pt,solid,color=blue] %
	table[x=x,y=y4,col sep=comma]{PlotE1.csv};
\addlegendentry{\scriptsize$\theta=\SI{135}{\degree}$}
\end{axis}
\end{tikzpicture}
\caption{\label{PlotE1}Plots of $\mathcal{E}_1(x,\alpha)$ with $f=\SI{4}{\GHz}$ and various $\alpha$.}
\end{figure}

\begin{remark}[Appearance of some artifacts]\label{Remark2}
Now, let us consider the following term in \eqref{Structure_Function}
\[\Phi_2(\mx,\alpha)=2\sum_{q=1}^{\infty}(-1)^qJ_{2q}\big(k(1+\cos\alpha)|\mx-\mz|\big)J_{2q}(k\sin\alpha|\mx-\mz|).\]
Since $J_q(0)=0$ for any nonzero $q$, $\Phi_2(\mx,\alpha)$ does not contribute to identify $D_m$. Moreover, based on the oscillation pattern of the function $J_q$, some artifacts whose magnitudes are relatively smaller than the ones on $D_m$ are also included in the map of $\mathfrak{F}_{\bfm}(\mx,\alpha)$. For a proper discussion, we consider the following:
\[\mathcal{E}_2(x,\alpha)=\abs{2\sum_{q=1}^{10^5}(-1)^qJ_{2q}\big(k(1+\cos\alpha)|x|\big)J_{2q}(k\sin\alpha|x|)}.\]
By regarding the plots of $\mathcal{E}_2(x,\alpha)$ with various $\alpha$ in Fig. \ref{PlotE2}, we can observe that $\Phi_2(\mx,\alpha)$ will disturb the recognition of $D_m$ and contribute to produce several artifacts.
\end{remark}

\begin{figure}[h]
\centering
\begin{tikzpicture}
\scriptsize
\begin{axis}
[width=\textwidth,
height=0.45\textwidth,
enlarge x limits = false,
enlarge y limits = false,
xtick={-0.1,-0.05,0,0.05,0.1},
xticklabel style={/pgf/number format/fixed},
legend cell align={left}]
\addplot[line width=1pt,solid,color=red] %
	table[x=x,y=y1,col sep=comma]{PlotE2.csv};
\addlegendentry{\scriptsize$\theta=\SI{0}{\degree}$}

\addplot[line width=1pt,solid,color=green!60!black] %
	table[x=x,y=y2,col sep=comma]{PlotE2.csv};
\addlegendentry{\scriptsize$\theta=\SI{60}{\degree}$}

\addplot[line width=1pt,solid,color=orange] %
	table[x=x,y=y3,col sep=comma]{PlotE2.csv};
\addlegendentry{\scriptsize$\theta=\SI{90}{\degree}$}

\addplot[line width=1pt,solid,color=blue] %
	table[x=x,y=y4,col sep=comma]{PlotE2.csv};
\addlegendentry{\scriptsize$\theta=\SI{135}{\degree}$}
\end{axis}
\end{tikzpicture}
\caption{\label{PlotE2}Plots of $\mathcal{E}_2(x,\alpha)$ with $f=\SI{4}{\GHz}$ and various $\alpha$.}
\end{figure}

\begin{remark}[Specific cases: $\alpha=0$ and $\alpha=\pi$]\label{Remark3}
Assume that $\alpha=0$ then this problem becomes an identification of inhomogeneities in monostatic measurement setup. Then since $1+\cos\alpha=2$, $\sin\alpha=0$, $J_0(0)=1$, and $J_q(0)=0$ for nonzero $q$, $\mathfrak{F}_{\bfm}(\mx,\alpha)$ becomes
\[\mathfrak{F}_{\bfm}(\mx,0)=Nk^2\int_D\left(\frac{\eps(\mz)-\eps_0}{\eps_0\mu_0}\right)J_0(2k|\mx-\mz|)d\mz.\]
Hence, it will be possible to identify the existence and location of $D_m$. It is noteworthy that this result aligns closely with the theoretical results derived in the study of the direct sampling method \cite{KLP3}.

In contrast, if $\alpha=\pi$ then since $1+\cos\alpha=0$ and $\sin\alpha=0$, $\mathfrak{F}_{\bfm}(\mx,\alpha)$ becomes
\[\mathfrak{F}_{\bfm}(\mx,\pi)=Nk^2\int_D\left(\frac{\eps(\mz)-\eps_0}{\eps_0\mu_0}\right)d\mz.\]
This means that the value of $\mathfrak{F}_{\bfm}(\mx,\pi)$ does not depend on the existence of $D_m$. Hence, it will be impossible to identify the existence and location of $D_m$.
\end{remark}

\begin{remark}[Unique identification of inhomogeneities]\label{Remark4}
Based on Remarks \ref{Remark1} and \ref{Remark2}, we can observe that the term $\Phi_1(\mx,\alpha)$ reaches its maximum value $1$ when $\mx\in D_m$ and based on numerical computation, the term $\Phi_2(\mx,\alpha)$ reaches its maximum value $0.4392$ when $|\mx-\mz|=\pm0.035$ (see Fig. \ref{PlotE2} also). Hence, in the neighborhood of $D_m$ the value of $\Phi_2(\mx,\alpha)$ will be dominated by $\Phi_1(\mx,\alpha)$ thus, the term $\Phi_2(\mx,\alpha)$ will not significantly compromise the detection of $D_m$. We also refer to Fig. \ref{PlotE3} for an illustration of
\[\mathcal{E}(x,\alpha)=\abs{J_0\big(k(1+\cos\alpha)|x|\big)J_0(k\sin\alpha|x|)+2\sum_{q=1}^{10^5}(-1)^qJ_{2q}\big(k(1+\cos\alpha)|x|\big)J_{2q}(k\sin\alpha|x|)}.\]
\end{remark}

\begin{figure}[h]
\centering
\begin{tikzpicture}
\scriptsize
\begin{axis}
[width=\textwidth,
height=0.45\textwidth,
enlarge x limits = false,
enlarge y limits = false,
xtick={-0.1,-0.05,0,0.05,0.1},
xticklabel style={/pgf/number format/fixed},
legend cell align={left}]
\addplot[line width=1pt,solid,color=red] %
	table[x=x,y=y1,col sep=comma]{PlotE3.csv};
\addlegendentry{\scriptsize$\theta=\SI{0}{\degree}$}

\addplot[line width=1pt,solid,color=green!60!black] %
	table[x=x,y=y2,col sep=comma]{PlotE3.csv};
\addlegendentry{\scriptsize$\theta=\SI{60}{\degree}$}

\addplot[line width=1pt,solid,color=orange] %
	table[x=x,y=y3,col sep=comma]{PlotE3.csv};
\addlegendentry{\scriptsize$\theta=\SI{90}{\degree}$}

\addplot[line width=1pt,solid,color=blue] %
	table[x=x,y=y4,col sep=comma]{PlotE3.csv};
\addlegendentry{\scriptsize$\theta=\SI{135}{\degree}$}
\end{axis}
\end{tikzpicture}
\caption{\label{PlotE3}Plots of $\mathcal{E}(x,\alpha)$ with $f=\SI{4}{\GHz}$ and various $\alpha$.}
\end{figure}

\section{Numerical simulation results}\label{sec:4}
In this section, we exhibit numerical simulation results to support the theoretical results and elucidate the discovered properties of the indicator function $\mathfrak{F}_{\bfm}(\mx,\alpha)$. The measured scattered field data was extracted from the two-dimensional Fresnel experimental dataset \cite{BS}. We considered the cases of a single dielectric circular inhomogeneity (\texttt{dielTM\_dec8f.exp}), two dielectric circular inhomogeneities (\texttt{twodielTM\_dec8f.exp}), and a single metallic rectangle (\texttt{rectTM\_dece.exp}). In addition, a $\SI{20}{\dB}$ white Gaussian random noise was added to the unperturbed scattered field data and $N=36$ number of measurement data with step size $\triangle\theta=\SI{10}{\degree}$ were used to generated imaging results. The imaging region $\Omega$ was selected as the square domain $[-\SI{0.1}{\meter},\SI{0.1}{\meter}]\times[-\SI{0.1}{\meter},\SI{0.1}{\meter}]$. Note that instead of the exhibition of the imaging result $\mathfrak{F}_{\bfm}(\mx,\alpha)$, we show the imaging results of the following normalized indicator function:
\[\mathfrak{F}_{\bfn}(\mx,\alpha)=\frac{\mathfrak{F}_{\bfm}(\mx,\alpha)}{\displaystyle\max_{\mx\in\Omega}\left\{\mathfrak{F}_{\bfm}(\mx,\alpha)\right\}}.\]

\begin{example}[Imaging of single dielectric circular inhomogeneity]\label{Ex1}
Fig. \ref{Fig1} shows the maps of $\mathfrak{F}_{\bfn}(\mx,\alpha)$ at a frequency of operation $f=\SI{4}{\GHz}$ with various bistatic angles $\alpha$ in the presence of a single dielectric circular inhomogeneity $D$ with permittivity $(3\pm0.3)\eps_0$ centered at $(-\SI{0.030}{\meter},\SI{0.000}{\meter})$. Based on the imaging results, we can examine that for $\alpha=\SI{60}{\degree}$ and $\SI{90}{\degree}$, the existence and outline shape of $D$ is clearly identified with high resolution. However, as $\alpha$ increases to $\SI{120}{\degree}$, $\SI{135}{\degree}$, and $\SI{150}{\degree}$, the imaging results become blurred, and the resolution gradually degrades. This observation aligns with the property discussed in Remark \ref{Remark1}, which states that resolution degrades as $\alpha$ approaches $\SI{180}{\degree}$, whereas relatively high-resolution results are obtained when $\alpha$ is close to $\SI{0}{\degree}$. Note that when $\alpha=\SI{180}{\degree}$, the existence and location of the $D$ cannot be recognized through the map of $\mathfrak{F}_{\bfn}(\mx,\alpha)$. This validates the theoretical result in Theorem \ref{Theorem_Function} and discussion in Remark \ref{Remark3}, which asserts that the value of the indicator function does not depend on the existence of inhomogeneities when $\alpha=\SI{180}{\degree}$.
\end{example}

\begin{figure}[h]
\includegraphics[width=.33\textwidth]{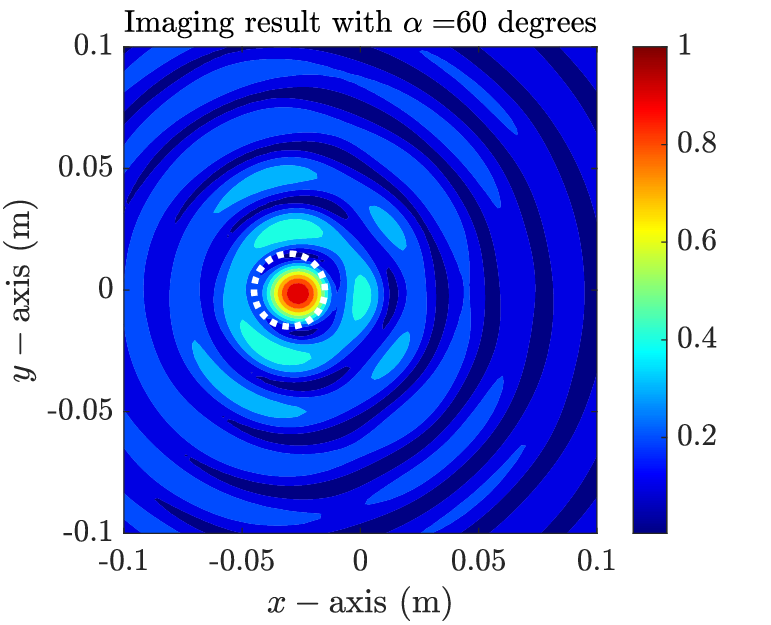}\hfill
\includegraphics[width=.33\textwidth]{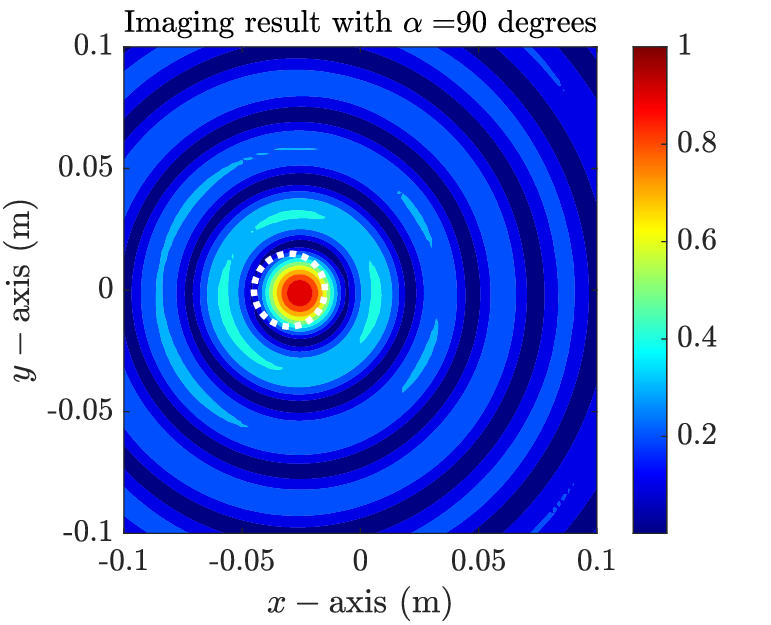}\hfill
\includegraphics[width=.33\textwidth]{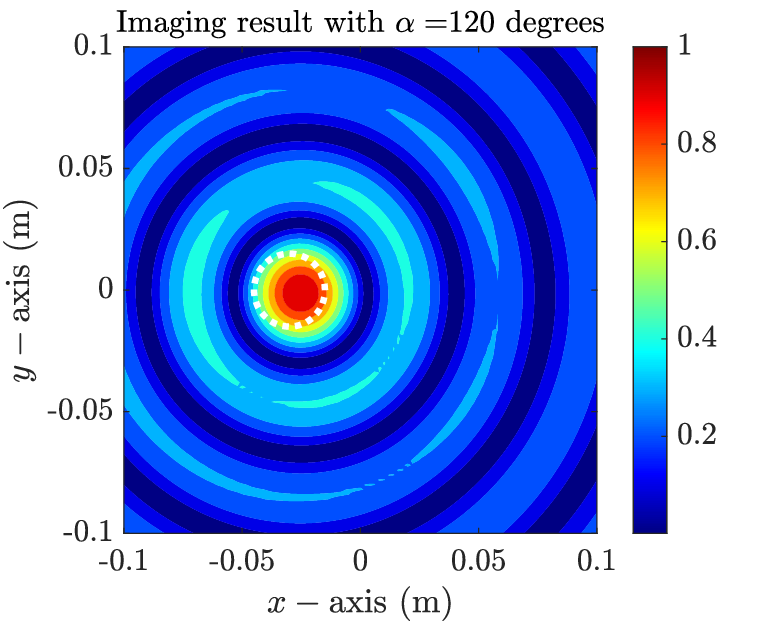}\\
\includegraphics[width=.33\textwidth]{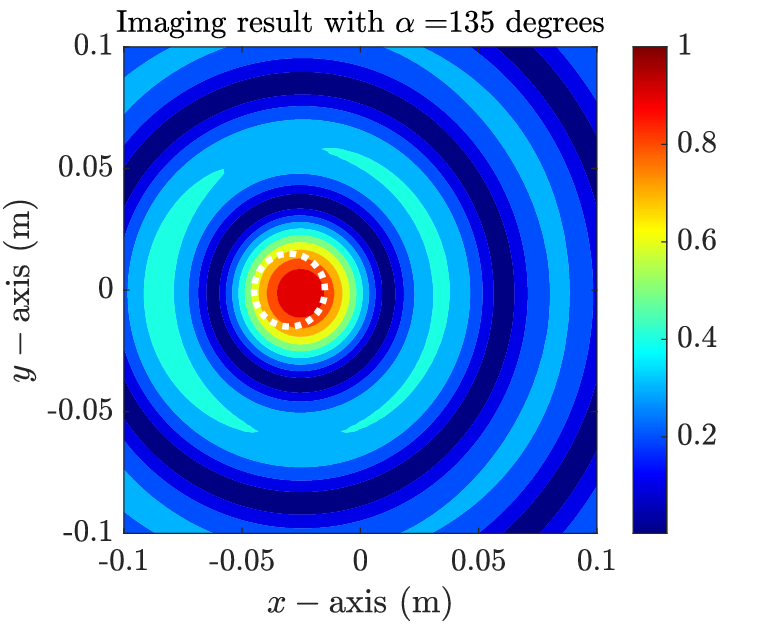}\hfill
\includegraphics[width=.33\textwidth]{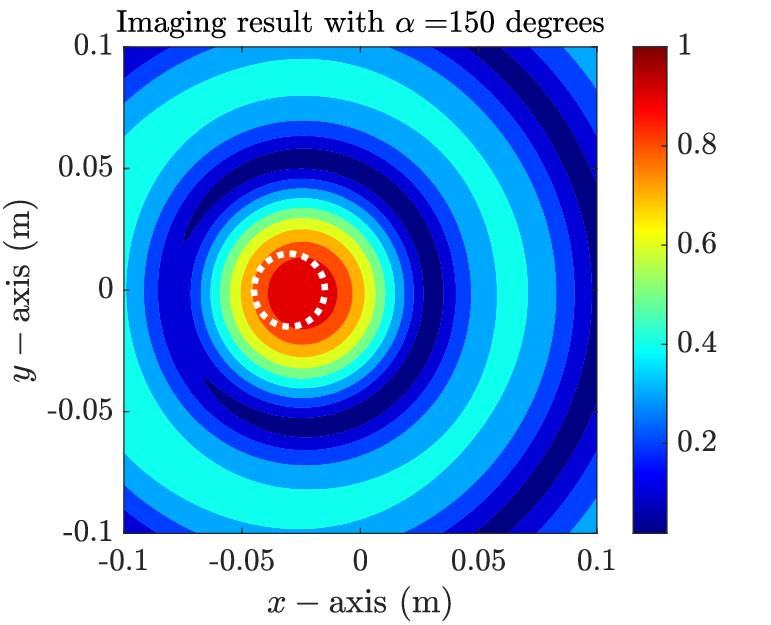}\hfill
\includegraphics[width=.33\textwidth]{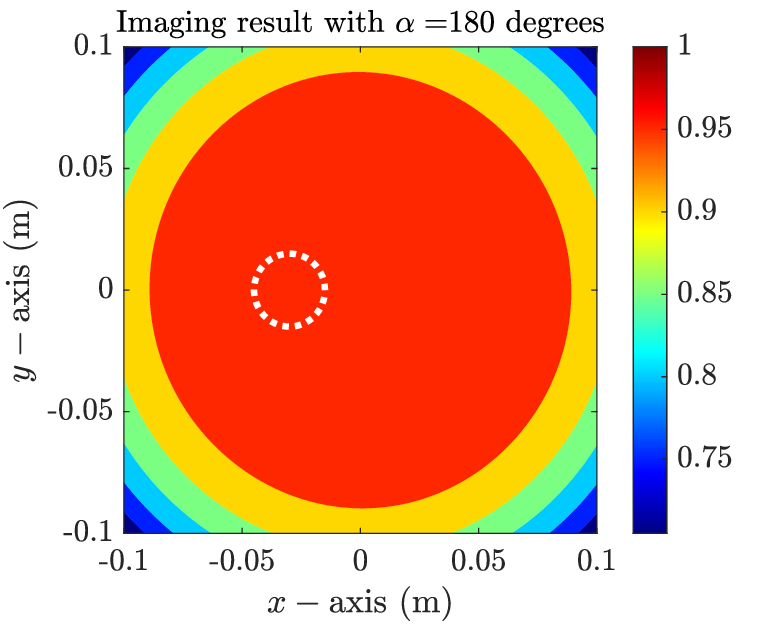}
\caption{\label{Fig1}(Example \ref{Ex1}) Maps of $\mathfrak{F}_{\bfn}(\mx,\alpha)$ at $f=\SI{4}{\GHz}$ with various $\alpha$.}
\end{figure}

\begin{example}[Imaging of two dielectric circular inhomogeneities]\label{Ex2}
Fig. \ref{Fig2} displays the imaging results in the presence of two dielectric circular inhomogeneities $D_1$ and $D_2$ centered at $(-\SI{0.045}{\meter},\SI{0.000}{\meter})$ and $(\SI{0.045}{\meter},\SI{0.010}{\meter})$, respectively, and with same permittivity $(3\pm0.3)\eps_0$. In contrast to the theoretical result and imaging of single inhomogeneity, it is very difficult to identify $D_1$ and $D_2$ when $\alpha=\SI{60}{\degree}$ and $\alpha=\SI{150}{\degree}$. Note that although peaks of large magnitudes were appeared when $\mx\in D_1\cup D_2$, due to the appearance of another peaks of large magnitudes, the distinction between the two objects becomes ambiguous when $\alpha=\SI{90}{\degree}$ and $\alpha=\SI{120}{\degree}$. Fortunately, when $\alpha=\SI{135}{\degree}$, $D_1$ and $D_2$ are clearly identified through the imaging result. However, similar to the single object case, identification becomes impossible when $\alpha$ reaches $\SI{180}{\degree}$. Hence, further improvement of the imaging performance is required still.
\end{example}

\begin{figure}[h]
\includegraphics[width=.33\textwidth]{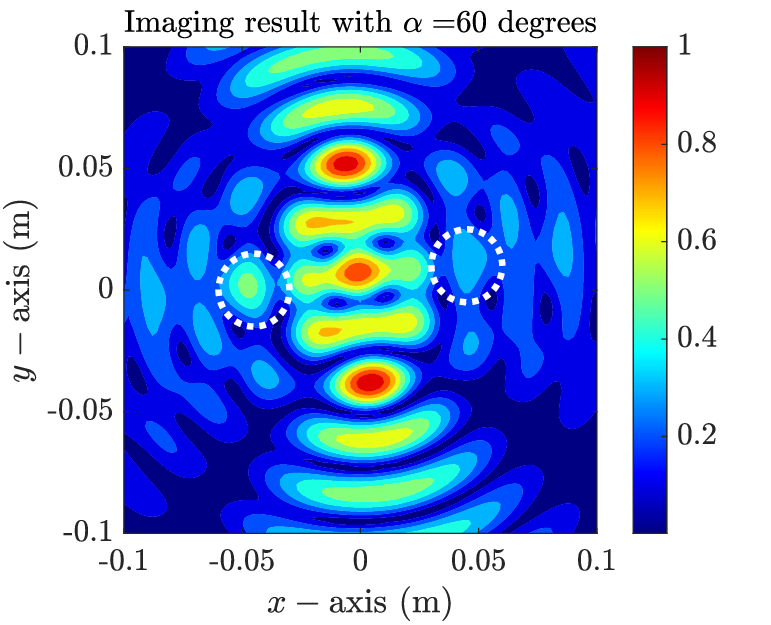}\hfill
\includegraphics[width=.33\textwidth]{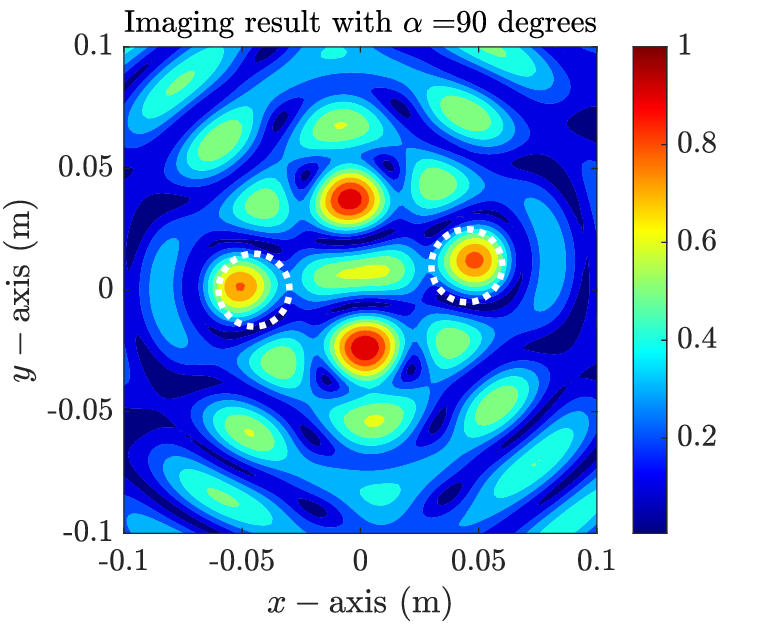}\hfill
\includegraphics[width=.33\textwidth]{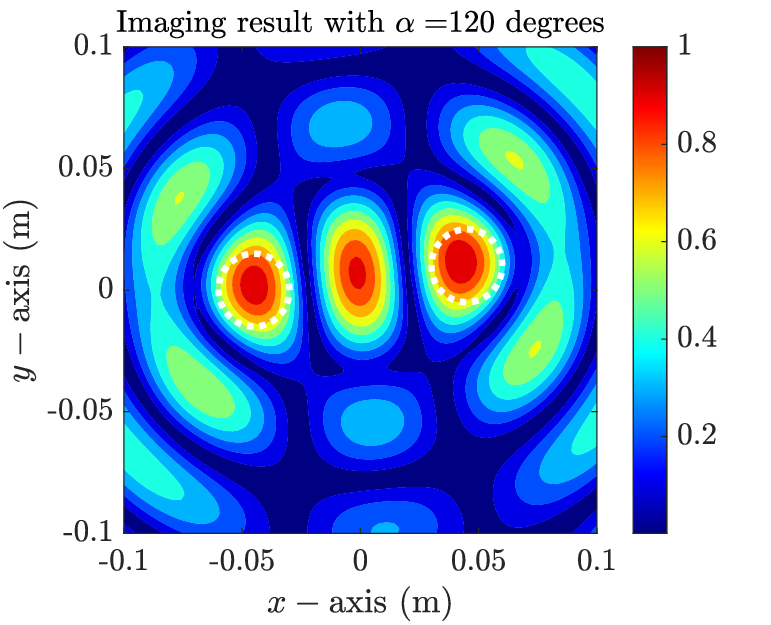}\\
\includegraphics[width=.33\textwidth]{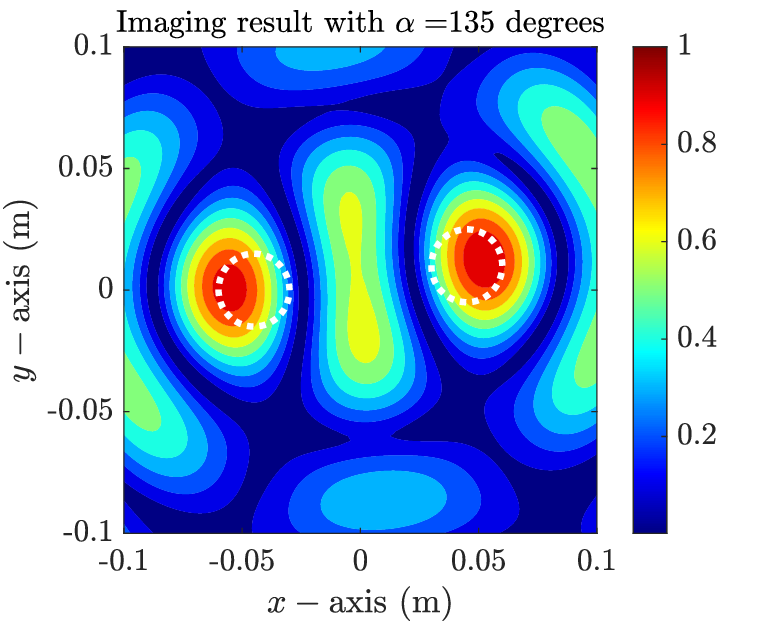}\hfill
\includegraphics[width=.33\textwidth]{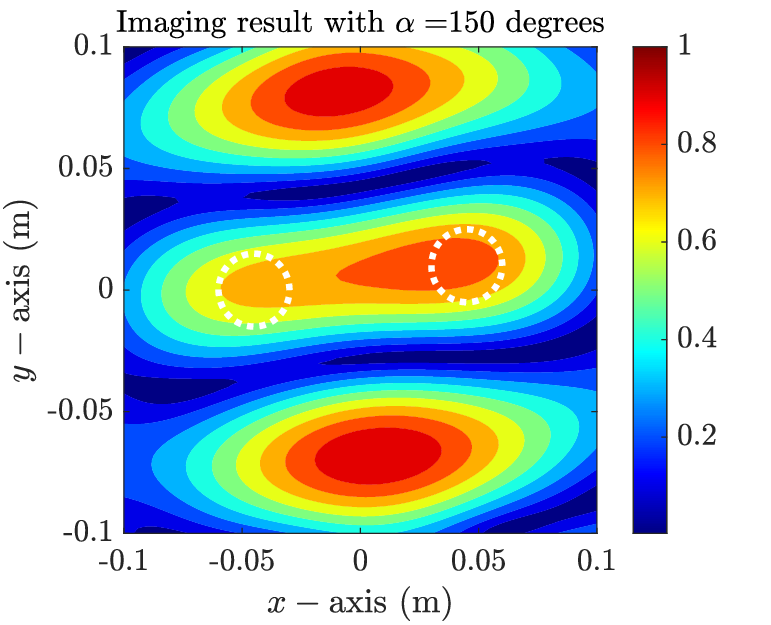}\hfill
\includegraphics[width=.33\textwidth]{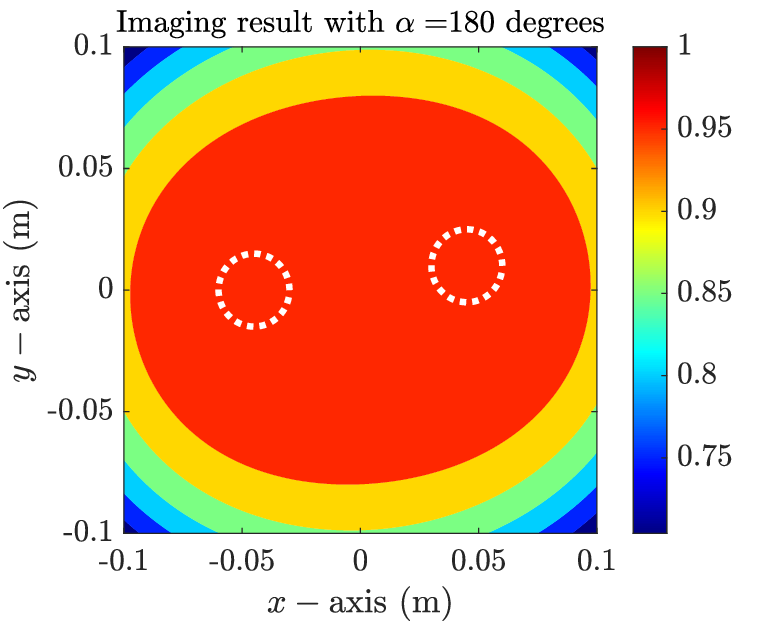}
\caption{\label{Fig2}(Example \ref{Ex2}) Maps of $\mathfrak{F}_{\bfn}(\mx,\alpha)$ at $f=\SI{4}{\GHz}$ with various $\alpha$.}
\end{figure}

\begin{example}[Imaging of single metallic rectangle]\label{Ex3}
Now, we apply the indicator function for identifying a perfectly conducting material with non-smooth boundary. Fig. \ref{Fig3} illustrates the imaging results for a single metallic rectangular object $D$ with dimension $\SI{0.0254}{\m}\times\SI{0.0127}{\m}$. Although the theoretical background of this study assumes dielectric inhomogeneities, the results demonstrate that the designed indicator function operates effectively for metallic object with non-smooth boundary as well. Similar to the Example \ref{Ex1}, the location of $D$ was relatively well-reconstructed when $\alpha=\SI{60}{\degree}$ and $\SI{90}{\degree}$. However, as $\alpha$ increases, the image spreads significantly, and imaging fails at $\alpha=\SI{180}{\degree}$. This suggests that the bistatic angle $\alpha$ is a critical factor determining imaging performance. It is noteworthy to emphasize that although a reliable result has been obtained, exact shape of rectangular object cannot be retrieved. 
\end{example}

\begin{figure}[h]
\includegraphics[width=.33\textwidth]{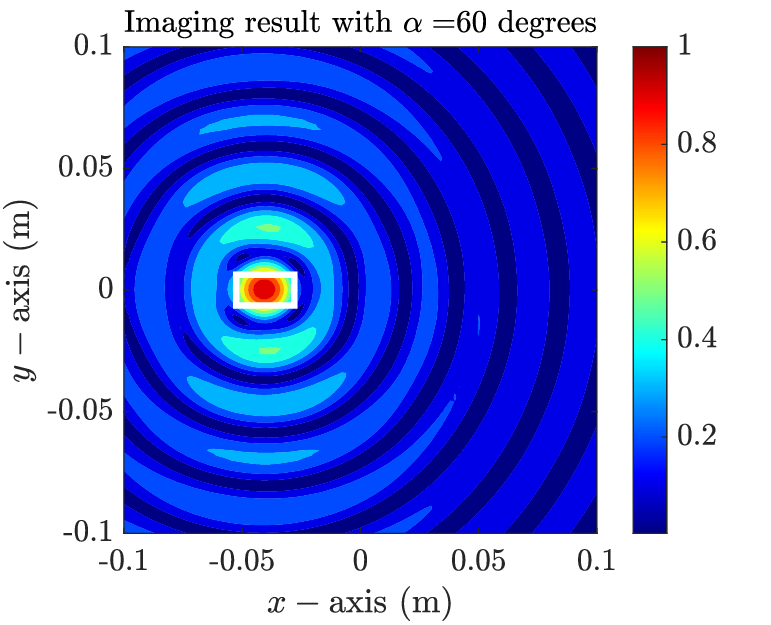}\hfill
\includegraphics[width=.33\textwidth]{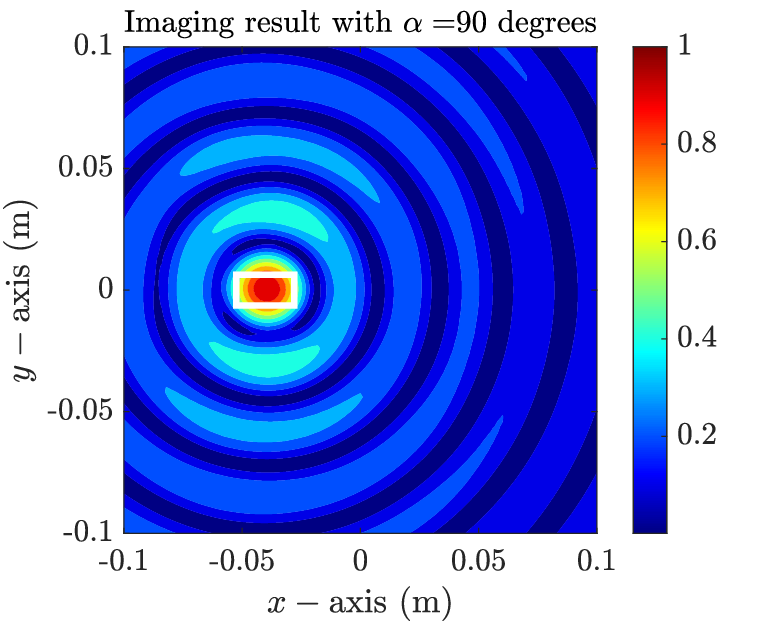}\hfill
\includegraphics[width=.33\textwidth]{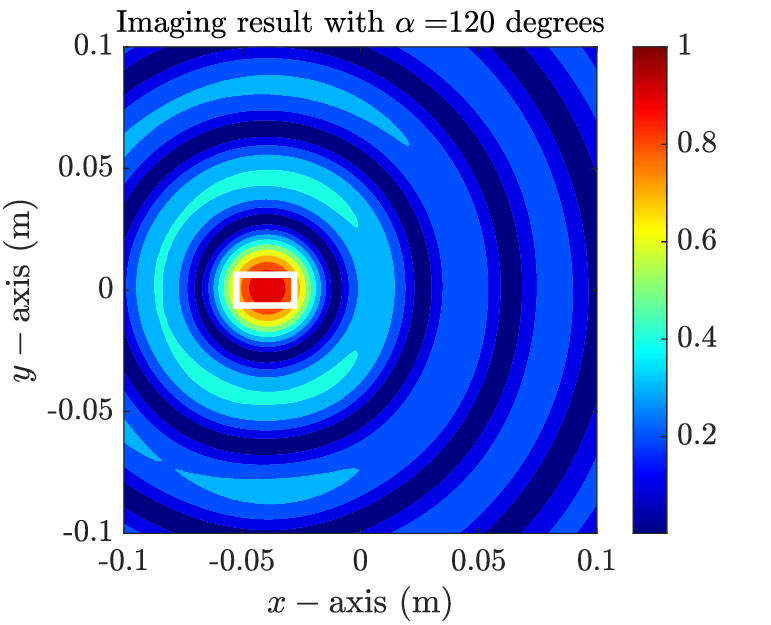}\\
\includegraphics[width=.33\textwidth]{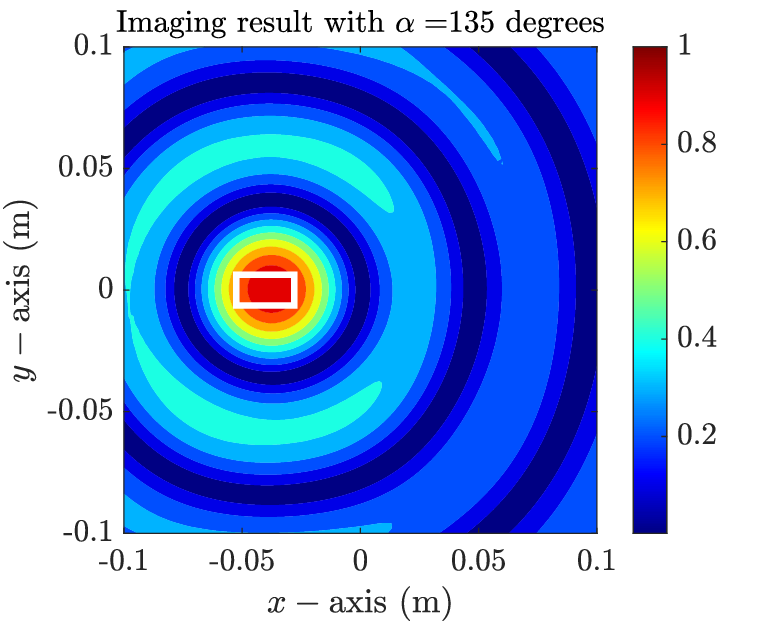}\hfill
\includegraphics[width=.33\textwidth]{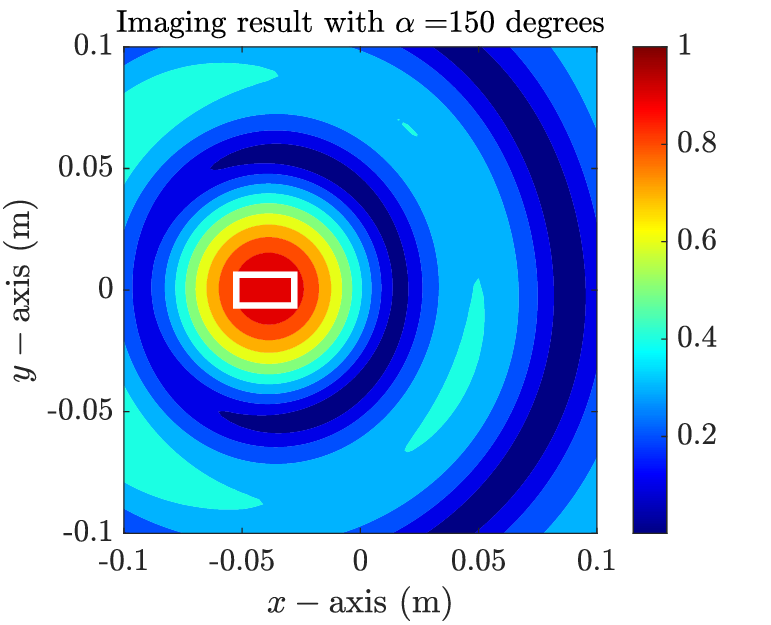}\hfill
\includegraphics[width=.33\textwidth]{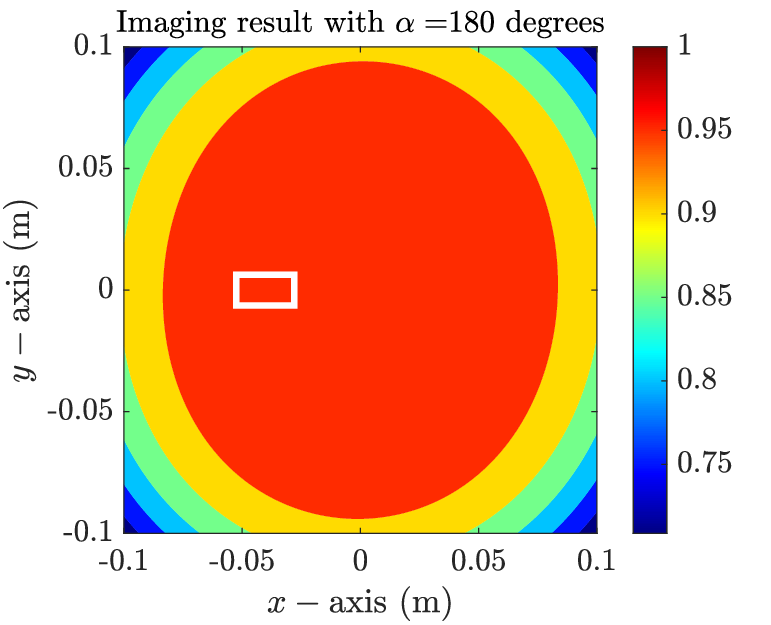}
\caption{\label{Fig3}(Example \ref{Ex3}) Maps of $\mathfrak{F}_{\bfn}(\mx,\alpha)$ at $f=\SI{4}{\GHz}$ with various $\alpha$.}
\end{figure}

\begin{example}[Further result corresponding to the frequency of operation]\label{Ex4}
To examine the imaging performance with respect to frequency changes, we compare the results at $f=\SI{2}{\GHz}$ (Figure \ref{Fig4}) and $f=\SI{6}{\GHz}$ (Figure \ref{Fig5}) in the presence of two dielectric circular inhomogeneities $D_1$ and $D_2$ in Example \ref{Ex2}. In the case of $f=\SI{2}{\GHz}$, the results show lower resolution and wider spreading compared to the $f=\SI{4}{\GHz}$ (Example \ref{Ex2} and Fig. \ref{Fig2}). It is interesting to examine that opposite to the results in Fig. \ref{Fig2}, it is possible to identify $D_1$ and $D_2$ when $\alpha=\SI{60}{\degree}$ and $\alpha=\SI{90}{\degree}$ while impossible to recognize the existence of $D_1$ and $D_2$ when $\alpha=\SI{135}{\degree}$. In the case of $f=\SI{6}{\GHz}$, we can examine similar phenomenon: the identification is significantly dependent on the frequency of operation and bistatic angle $\alpha$.
\end{example}

\begin{figure}[h]
\includegraphics[width=.33\textwidth]{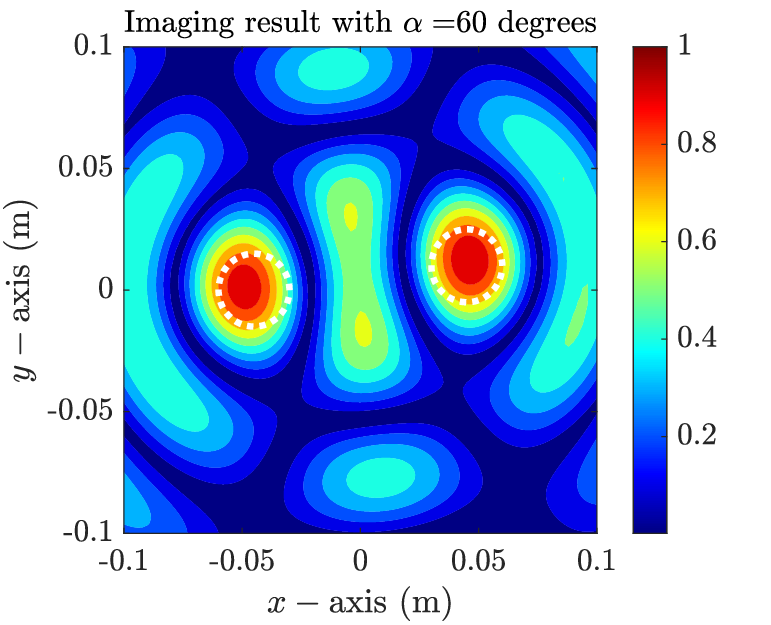}\hfill
\includegraphics[width=.33\textwidth]{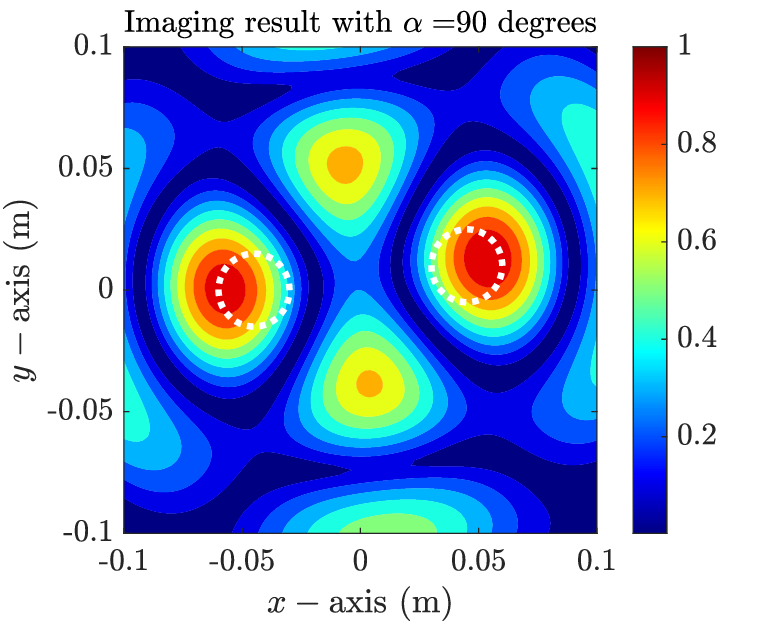}\hfill
\includegraphics[width=.33\textwidth]{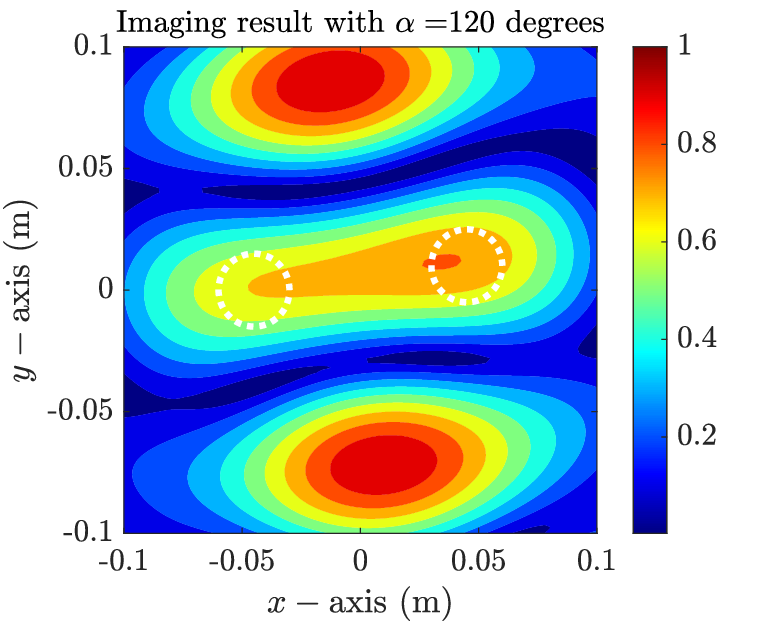}\\
\includegraphics[width=.33\textwidth]{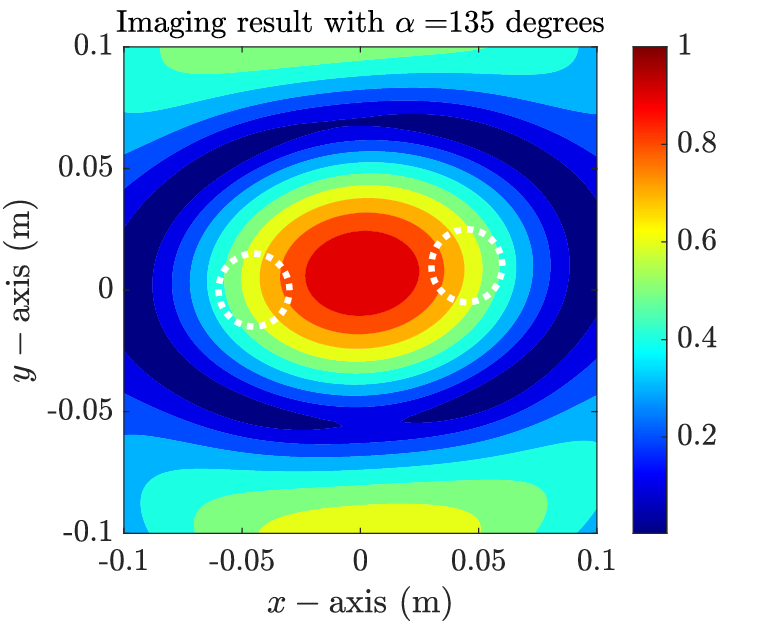}\hfill
\includegraphics[width=.33\textwidth]{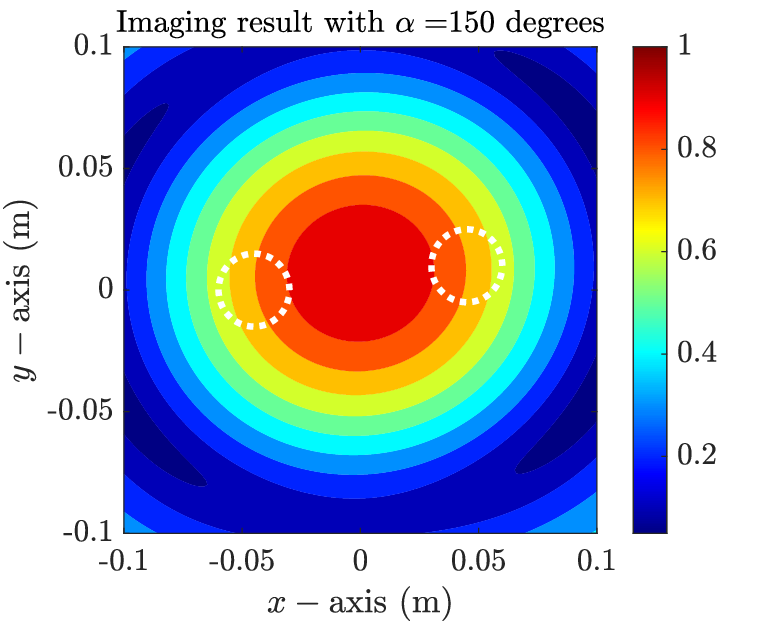}\hfill
\includegraphics[width=.33\textwidth]{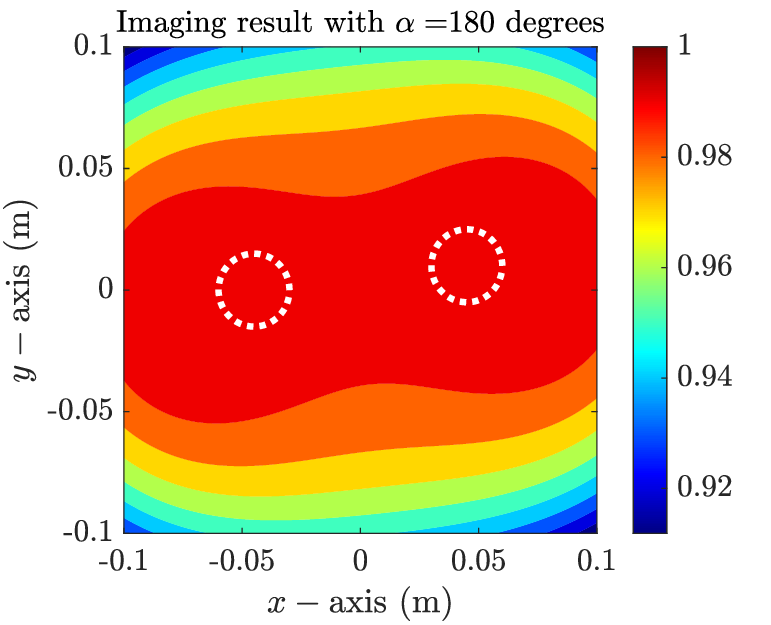}
\caption{\label{Fig4}(Example \ref{Ex4}) Maps of $\mathfrak{F}_{\bfn}(\mx,\alpha)$ at $f=\SI{2}{\GHz}$ with various $\alpha$.}
\end{figure}

\begin{figure}[h]
\includegraphics[width=.33\textwidth]{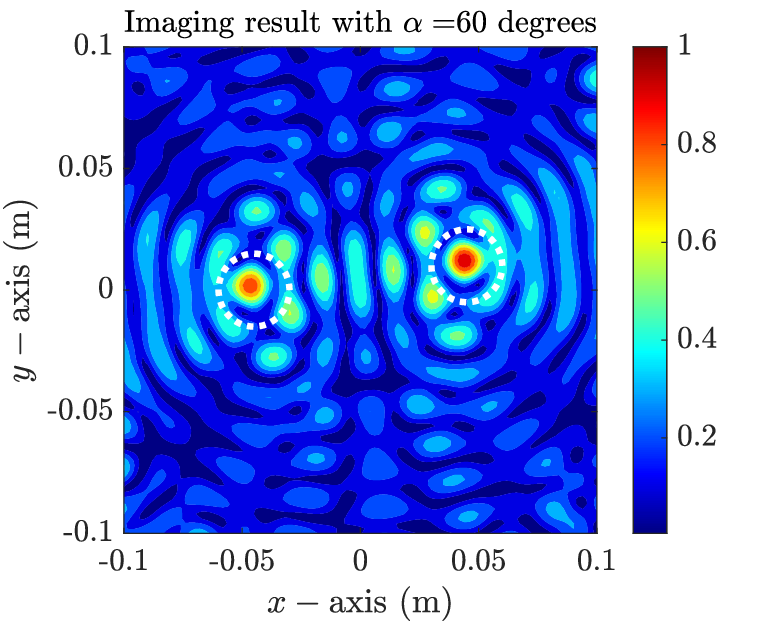}\hfill
\includegraphics[width=.33\textwidth]{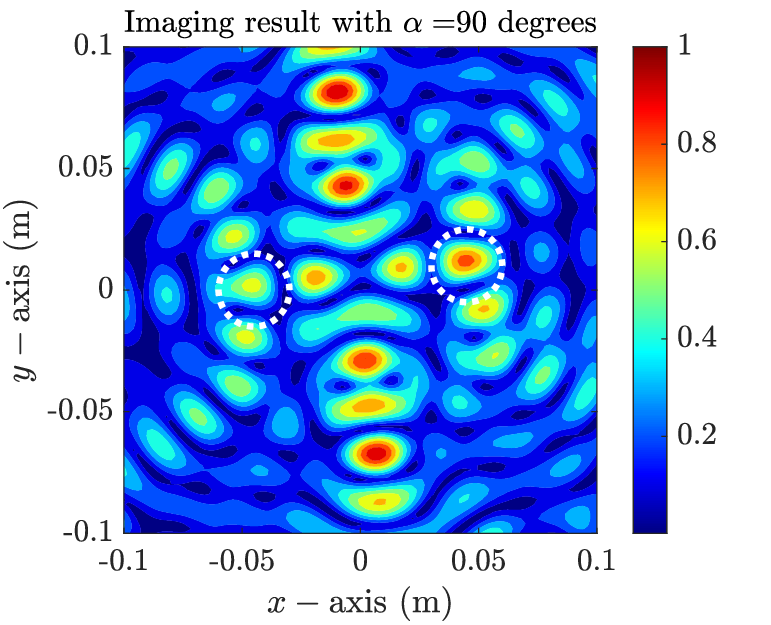}\hfill
\includegraphics[width=.33\textwidth]{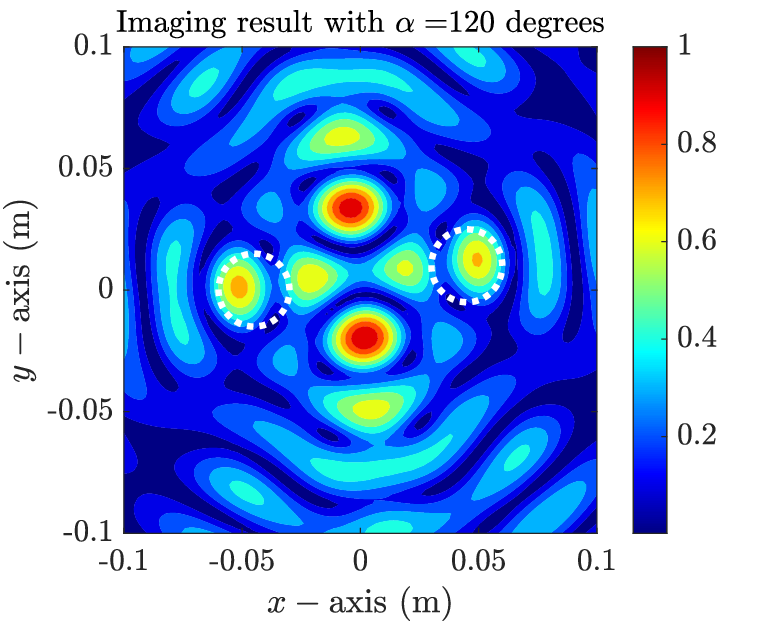}\\
\includegraphics[width=.33\textwidth]{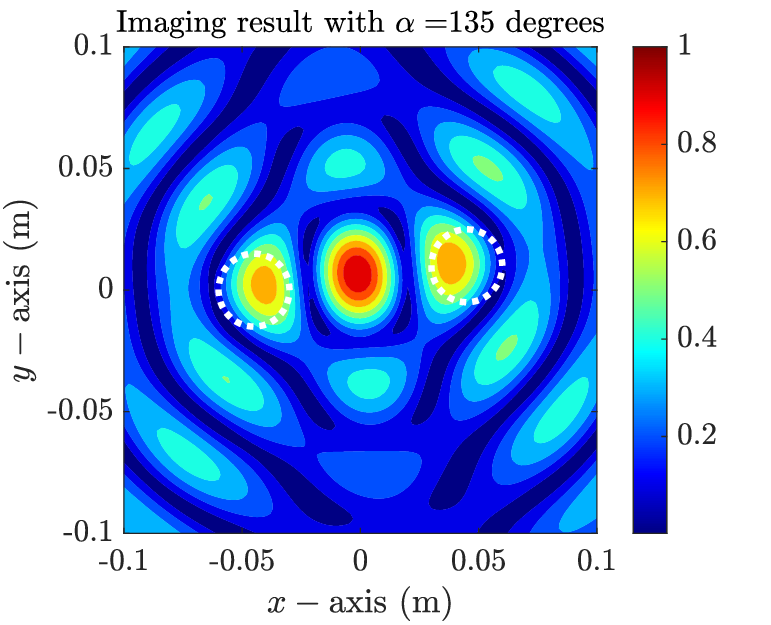}\hfill
\includegraphics[width=.33\textwidth]{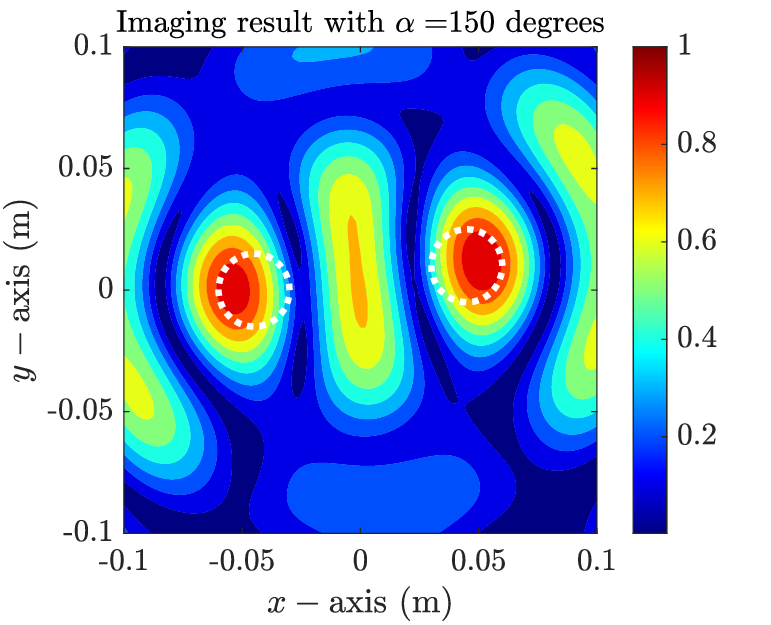}\hfill
\includegraphics[width=.33\textwidth]{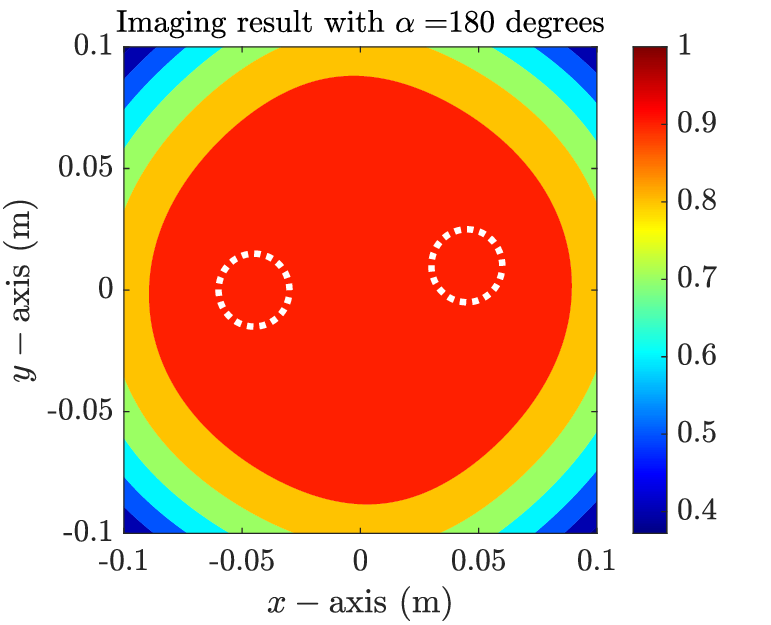}
\caption{\label{Fig5}(Example \ref{Ex4}) Maps of $\mathfrak{F}_{\bfn}(\mx,\alpha)$ at $f=\SI{6}{\GHz}$ with various $\alpha$.}
\end{figure}

\section{Conclusion}\label{sec:5}
In this paper, we considered a bifocusing-based imaging algorithm for the rapid identification of small penetrable dielectric inhomogeneities in a bistatic measurement setup. To demonstrate its applicability, dependence on the bistatic angle, and limitations, we derived the mathematical structure of the indicator function by establishing a relationship involving the infinite series of Bessel functions, the characteristics of the inhomogeneities, and the bistatic angle. We further theoretically verified why the selection of a bistatic angle $\alpha=\SI{180}{\degree}$ is unsuitable for identifying inhomogeneities. Numerical simulation results using the Fresnel experimental dataset polluted by random noise were presented to support the theoretical findings.

Based on the simulation results, the proposed indicator function yields reliable results in the presence of a single target; however, its performance degrades in the presence of multiple targets. The development of an improved imaging algorithm to address this issue remains a subject for future research. Finally, the extension of this work to three-dimensional problems is also a promising research topic.

\backmatter

\bmhead{Acknowledgements}
Part of this work was conducted during a visit to the Department of Mathematics at Hokkaido University. The author would like to thank Professor Nakamura Gen for his kind invitation and the warm hospitality during the visit.

\bmhead{Funding} This work was supported by the National Research Foundation of Korea (NRF) grant funded by the Korea government (MSIT) (RS-2025-16067902).
\bmhead{Author Contribution} The author confirms sole responsibility for the manuscript.
\bmhead{Data Availability Statements} This manuscript has no data from repository.
\bmhead{Conflict of Interest} The author declares that he has no competing interests.


\end{document}